
\UseRawInputEncoding

\documentclass[a4paper,12pt]{article}

\usepackage{amsmath,amsfonts,amssymb,amsthm,amscd}
\usepackage {amsfonts, amssymb, amscd,amsthm, amsmath, eucal}

\usepackage{amsbsy}
\usepackage[all]{xy}

\topmargin 1 cm \topskip 0 cm \textwidth 16 cm \textheight 22 cm
\oddsidemargin 0.5 cm \evensidemargin 0.5 cm \headheight 0 cm
\headsep 0 cm \marginparwidth 0 cm \footskip 1.2 cm
\theoremstyle{plain} {
  \newtheorem{thm}{Theorem}[section]
  \newtheorem{defn}[thm]{Definition}
  \newtheorem{cor}[thm]{Corollary}
  \newtheorem{lem}[thm]{Lemma}
  \newtheorem{prop}[thm]{Proposition}
  \theoremstyle{definition}
  \newtheorem{rem}[thm]{Remark}
    \newtheorem{constr}[thm]{Construction}
    
  \theoremstyle{plain}
  
  \newtheorem{notation}[thm]{Notation}
  \newtheorem{agr}[thm]{Agreement}
  \newtheorem{setup}[thm]{Setup}
  
}

{

}
\renewcommand{\subsubsection}{\sssection\rm}



\newcommand{\bG}{\mathbf G}

\newcommand{\bW}{\mathbb W}

\newcommand{\cE}{\mathcal E}

\newcommand{\cL}{\mathcal L}
\newcommand{\cM}{\mathcal M}
\newcommand{\cO}{\mathcal O}

\newcommand{\cU}{\mathcal U}












\newcommand{\can}{\text{\rm can}}

\newcommand{\pr}{\text{\rm pr}}

\newcommand{\inc}{\text{\rm inc}}

\newcommand{\Aff}{\mathbf {A}}
\newcommand{\Pro}{\mathbf {P}}


\newcommand \xra {\xrightarrow }

\newcommand \hra {\hookrightarrow }










\renewcommand \phi\varphi


\begin{document}

\title{Constant case of the Grothendieck--Serre conjecture in mixed characteristic
}

\author{I.Panin and A.Stavrova\footnote{September the 10-th of 2025}
}

\date{Steklov Mathematical Institute at St.-Petersburg}

\maketitle

\begin{abstract}
Let $D$ be a DVR of mixed characteristic.
Let $\bG$ be a reductive $D$-group scheme.
Then the Grothendieck--Serre conjecture is true for the $D$-group scheme $\bG$ and any
geometrically regular local $D$-algebra $R$. Also we prove a version of
Lindel--Ojanguren--Gabber's geometric presentation lemma in the DVR context.




\end{abstract}

\section{Main Theorem and some others}\label{Introd}
The aim of this preprint is to prove the following
\begin{thm}\label{Main}{\rm
Let $D$ be a D.V.R. of mixed characteristic and $\bG$ be a reductive $D$-group scheme.
Let $R\supseteq D$ be a geometrically regular local
$D$-algebra and $K$ be the fraction field of $R$. Let $E$ be a principal $\bG$-bundle over $Spec\ R$. Suppose
$E$ is trivial over $Spec\ K$. Then it is trivial.
}
\end{thm}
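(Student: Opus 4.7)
The plan is to follow the Panin--Fedorov strategy for the Grothendieck--Serre conjecture, adapted to the mixed-characteristic DVR context, exploiting both the constancy of $\bG$ (i.e.\ the fact that $\bG$ is pulled back from $D$) and the DVR-version of the Lindel--Ojanguren--Gabber presentation lemma advertised in the abstract.

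First I would reduce to the case where $R$ is essentially smooth over $D$. Since $R$ is a geometrically regular local $D$-algebra, Popescu's desingularization theorem realizes $R$ as a filtered colimit of smooth $D$-algebras $R_\alpha$. Because $\bG$ is of finite presentation over $D$ and the pair $(E,\ \text{trivialization over }K)$ is finitely presented, standard limit arguments descend these data to some stage. After localizing and passing further along the system I may assume $R = \cO_{X,x}$ for a smooth affine $D$-scheme $X = \Spec(A)$ and a closed point $x \in X$, and that the closed subscheme $Y \subset X$ on which $E$ fails to be trivial is a principal divisor cut out by some $f \in A$ not vanishing at any generic point of $X$.

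Next I would invoke the DVR-version of the Lindel--Ojanguren--Gabber presentation lemma to produce a smooth affine $D$-scheme $S$, a section $\sigma : S \hookrightarrow X$ through $x$, and a finite morphism $\pi : X \to \mathbb{A}^1_S$ which is \'etale in a Zariski neighborhood of $Y$, restricts to a closed immersion on $Y$, and satisfies $\pi^{-1}(\pi(Y)) = Y \sqcup Y''$ with $\sigma(S) \cap Y'' = \emptyset$. This yields an elementary Nisnevich distinguished square relating $(X \setminus Y, X)$ with $(\mathbb{A}^1_S \setminus \pi(Y), \mathbb{A}^1_S)$. By Nisnevich descent, trivializing $E$ over $R$ is thereby reduced to trivializing the corresponding $\bG$-torsor $E'$ on a Nisnevich neighborhood of $\pi(x) \in \mathbb{A}^1_S$, where $E'$ is trivial away from the horizontal divisor $\pi(Y)$ and along the section induced by $\sigma$.

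Finally, the leverage of the constant case enters: writing $\bG = \bG_0 \times_D S$, one invokes an affine-line/Horrocks-type patching statement for $\bG_0$-torsors on $\mathbb{A}^1_S$, to the effect that any such torsor trivial away from a horizontal divisor and along $\sigma$ becomes trivial in a Nisnevich neighborhood of $\pi(x)$. Passing this back through the Nisnevich square yields the required trivialization of $E$. The main obstacle will be this last patching step in mixed characteristic: in equal characteristic it goes back to work of Raghunathan, Colliot-Th\'el\`ene--Ojanguren and Fedorov--Panin, but here one must simultaneously control the behavior of $\bG_0$-torsors over the special fibre (whose residue field may be imperfect and of small positive characteristic) and over the generic fibre of $D$, and ensure that the residue map at the closed point of $D$ does not obstruct patching. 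A secondary difficulty is Step 2 itself: unlike the classical Gabber presentation over a field, the \'etaleness of $\pi$ along $Y$ must be arranged uniformly over both fibres of $D$, which is precisely what the DVR-version of the presentation lemma mentioned in the abstract is designed to deliver.
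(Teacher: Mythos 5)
Your plan is genuinely different from the paper's and has a real gap, so let me compare.

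Both arguments begin the same way: Popescu reduces to $R = \cO_{X,x}$ with $X/D$ smooth affine integral, and Guo's theorem produces a divisor $Z\subset X$ not containing any component of $X_v$ with $E|_{X-Z}$ trivial. (The paper also peels off the easy cases $x\notin Z_v$ and $X_v=\emptyset$ via Fedorov--Panin.) Both then want a geometric presentation near $x\in Z_v$. This is where you diverge.

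What you call ``the DVR-version of the Lindel--Ojanguren--Gabber presentation lemma'' and use to construct the finite morphism $\pi\colon X\to\Aff^1_S$ over a smooth base $S$ --- \'etale near $Y$, a closed immersion on $Y$, with $\pi^{-1}(\pi(Y))=Y\sqcup Y''$ missing the section --- is \emph{not} the LOG lemma. It is the much stronger elementary-fibration/nice-triples presentation (the analogue of Colliot-Th\'el\`ene--Ojanguren's or Fedorov--Panin's geometric lemma). In mixed characteristic the paper establishes a statement of that shape only when $X$ is an open of the form $\Pro^{\circ,n}_V$ (Theorem \ref{Major_1}, built from Theorem \ref{Very_General_Diagram}); the argument uses the ambient projective structure and, crucially for finite $k(v)$, Poonen's Bertini theorem. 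There is no such result for a general smooth affine $V$-scheme $X$ when $k(v)$ is finite, so step 2 of your plan does not get off the ground.

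The LOG lemma the paper actually proves (Theorems \ref{O-G-L-DVR} and \ref{geometric_form_of_O-G-L}) is weaker: it produces a finite morphism $X\to\Aff^n_V$ \'etale at $x$ and an elementary distinguished square relating a localization of $X$ to $\cW=\Spec\,\cO_{\Aff^n_V,y}$. Descending along it yields a constant $\bG$-torsor $\cE$ over the local ring $\cO_{\Aff^n_V,y}$ (and this is precisely where ``constant case'' is used, since $\bG$ has to make sense over $\cW$). To close the gap between this weak presentation and the needed strong one, the paper inserts a step your plan has no analogue of: Theorem \ref{E_circ} spreads $\cE$ out over an affine open of $\Aff^n_V$ and then, via Colliot-Th\'el\`ene--Sansuc purity in codimension $2$ (\cite[Thm.\ 6.13]{C-T/S}), extends it to a $\bG$-torsor $E^\circ$ over an open $\Pro^{\circ,n}_V\subset\Pro^n_V$ whose complement has codimension $\ge 2$ in each fibre. \emph{Only after landing on $\Pro^{\circ,n}_V$} does the paper run the presentation argument (finite $k(v)$: Theorem \ref{Major_1}; infinite $k(v)$: a linear projection/blow-down) and finish with the Horrocks-type statement over the relative $\Pro^1_U$ from \cite[Thm.\ 1.12, Cor.\ 1.14]{PSt2}. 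You correctly flag the Horrocks patching as a place where work is needed, and it is handled by the cited results, but the more serious missing input in your sketch is the strong presentation over a general smooth $X/V$, which is replaced in the paper by the LOG-to-$\Aff^n_V$ reduction followed by purity-extension to projective space.
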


\begin{rem}\label{constant case}
In the other words, for arbitrary DVR ring $D$ of mixed characteristic the Grothendieck--Serre conjecture is true
for all geometrically regular local $D$-algebras $R$ in the "constant" case. That is for all reductive $R$-group
schemes $\bG$ extended from $D$.
\end{rem}

\begin{notation}\label{general}{\rm
We write $V$ for $Spec\ D$, $v\in V$ for the closed point of $V$, $\eta\in V$ for the generic point of $V$.
For each $V$-scheme $S$ write $S_v$ for the closed fibre of $S$ and $S_{\eta}$ for its generic fibre.
}
\end{notation}

\begin{agr}\label{cond_*}{\rm (Condition $(*)$).
Let $M\subseteq \Aff^n_V$ be a closed subset. We will say that $M$ satisfies the condition $(*)$ iff
$codim_{\Aff^n_v}(M_v)\geq 2$ and $codim_{\Aff^n_{\eta}}(M_{\eta})\geq 2$.
}
\end{agr}

\begin{notation}\label{upper_circle}{\rm
We will write $\Aff^{\circ,n}_V$ for any open subscheme of $\Aff^n_V$ of the form $\Aff^n_V-M$,
where $M\subseteq \Aff^n_V$ is closed subjecting the condition $(*)$.
Define open subschemes ${\Pro}^{\circ,n}_V$ of $\Pro^n_V$ similarly.

If $S\subseteq \Aff^n_V$ is a closed subset (say, a divisor), then for each open subset in $\Aff^n_V$ of the form $\Aff^{\circ,n}_V$ write
$S^{\circ}$ for $S\cap \Aff^{\circ,n}_V$ and call $S^{\circ}$ the trace of $S$ in $\Aff^{\circ,n}_V$.

Similarly, for a each closed $T$ in $\Pro^n_V$ and each open subset in $\Pro^n_V$ of the form ${\Pro}^{\circ,n}_V$
write $T^{\circ}$ for $T\cap \Pro^{\circ,n}_V$ and call
$T^{\circ}$ the trace of $T$ in $\Pro^{\circ,n}_V$.
}
\end{notation}



\begin{thm}\label{E_circ}{\rm
Let $\bG$ be a reductive $D$-group scheme, $x\in \Aff^n_V$ a closed point,
$Y\subset \Pro^n_V$ a divisor which does not contain the point $x$.
Let $E$ be a generically trivial $\bG$-bundle over $\Aff^n_V-Y$.
Then there exist a divisor $Y^{ext}$ in $\Pro^n_V$, containing $Y$, an open
${\Pro}^{\circ,n}_V$ in $\Pro^n_V$, a $\bG$-bundle $E^{\circ}$ over ${\Pro}^{\circ,n}_V$ such that \\
(a) $x\in \Aff^n_V-Y^{ext}$; \\
(b) $\Aff^n_V-Y^{ext}\subset {\Pro}^{\circ,n}_V$; \\
(c) the bundles $E^{\circ}|_{\Aff^n_V-Y^{ext}}$ and $E|_{\Aff^n_V-Y^{ext}}$ {\it are isomorphic}.
}
\end{thm}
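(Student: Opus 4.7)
The strategy is to use the generic triviality of $E$ to produce a rational trivialization $\phi$, absorb the codimension-one failure locus of $\phi$ into a divisor $Y^{ext}\supseteq Y$, and then glue $E|_{\Aff^n_V-Y^{ext}}$ with the trivial $\bG$-bundle to extend $E$ across the boundary in $\Pro^n_V$. The open $\Pro^{\circ,n}_V$ will be defined by removing only the residual bad locus, which by construction will have codimension at least two in every fibre.

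\paragraph{Construction.} Since $E$ is locally trivial on $\Aff^n_V-Y$ and $x$ belongs there, fix a trivialization of $E$ on a Zariski neighborhood of $x$ and let $\phi_0\colon E_{\eta_0}\xrightarrow{\sim}\bG_{\eta_0}$ be its restriction to the generic point $\eta_0$ of $\Pro^n_V$; then $\phi_0$ spreads to a trivialization $\phi$ of $E$ on some open $U\subseteq \Aff^n_V-Y$ containing $x$. Because $x$ is closed it lies in $\Aff^n_v$, so $Y\not\supseteq\Pro^n_v$, and hence the generic point $\xi$ of $\Pro^n_v$ belongs to $\Aff^n_V-Y$. The local ring $\cO_{\Pro^n_V,\xi}$ is a DVR with fraction field $K(\Pro^n_V)$; by the classical Grothendieck--Serre theorem over DVRs, $\phi_0$ extends to a trivialization of $E$ on $\Spec\cO_{\Pro^n_V,\xi}$, and thus on an open $U_\xi\ni \xi$. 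Separation of $\bG$ forces the two extensions of $\phi_0$ to agree on $U\cap U_\xi$, producing a single trivialization $\phi$ on $\widetilde U:=U\cup U_\xi$. Set $W:=(\Aff^n_V-Y)\setminus\widetilde U$ with closure $\bar W\subseteq \Pro^n_V$. The codimension-one components of $\bar W$ are horizontal divisors avoiding $x$ (since $x\in U$ and $\Pro^n_v\not\subseteq\bar W$); call their union $D_1$. The vertical codimension-two components of $\bar W$ are divisors of $\Pro^n_v$, finite in number; for $d\gg 0$ a dimension count on $|\cO_{\Pro^n_V}(d)|$ yields a hypersurface $H\subseteq \Pro^n_V$ whose restriction $H_v$ contains all of them while $x\notin H$. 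Put $Y^{ext}:=Y\cup D_1\cup H$. Then the bad locus of $\phi$ in $\Aff^n_V-Y^{ext}$ has codimension at least two in every fibre: horizontal components of codimension $k$ in $\Pro^n_V$ have codimension $k$ in each fibre by upper semi-continuity, and the remaining vertical components have codimension at least two by the choice of $H$. Let $\bar W'$ be the closure in $\Pro^n_V$ of this residual bad locus, set $M:=(Y^{ext}\cup(\Pro^n_V\setminus \Aff^n_V))\cap\bar W'$, and $\Pro^{\circ,n}_V:=\Pro^n_V\setminus M$, which satisfies $(*)$. Finally, glue $E|_{\Aff^n_V-Y^{ext}}$ with the trivial bundle on $\Pro^n_V\setminus\bar W'$ via $\phi$ along their common open to produce $E^\circ$; properties (a), (b), (c) are then immediate.

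\paragraph{Main obstacle.} The crux is enforcing condition $(*)$ inside the special fibre. Two types of bad components resist being absorbed into $Y^{ext}$ as divisors of $\Pro^n_V$: the whole $\Pro^n_v$ itself (adding it would eject $x$) and codimension-one subvarieties of $\Pro^n_v$ (these are codimension-two in $\Pro^n_V$, not divisors). The classical Grothendieck--Serre theorem applied to the DVR $\cO_{\Pro^n_V,\xi}$ rules out the first, and the auxiliary horizontal hypersurface $H$---chosen from a sufficiently ample linear system to pass through the finitely many offending divisors of $\Pro^n_v$ while missing the closed point $x$---handles the second. Once these two fibrewise geometric adjustments are in place, the gluing step is formal.
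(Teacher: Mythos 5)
The central step of your argument has a circularity that the paper's proof is specifically designed to avoid. You open with ``Since $E$ is locally trivial on $\Aff^n_V-Y$ and $x$ belongs there, fix a trivialization of $E$ on a Zariski neighborhood of $x$.'' But $E$ is only given as a (generically trivial) $\bG$-torsor; Zariski-local triviality of $E$ at the closed point $x$ is precisely the Grothendieck--Serre statement for the regular local ring $\cO_{\Aff^n_V,x}$ with the constant group scheme $\bG$. When $x$ lies in the closed fibre $\Aff^n_v$ (the case that actually occurs in the proof of Theorem~\ref{Main}), this ring has mixed characteristic and the claim is an instance of the very theorem being proven. Everything downstream of this choice --- in particular the assertion that the codimension-one components of $\bar W$ ``avoid $x$'' and may therefore be absorbed into $Y^{ext}$ --- depends on it. The paper sidesteps the issue entirely: it chooses $Y^{ext}$ so that every component of the bad divisor $Z$ \emph{passes through} $x$ while $Y^{ext}$ avoids $x$; then the glued bundle $E'$ lives on $\Pro^n_V-(Y^{ext}\cap Z)$, which contains $x$, without ever trivializing $E$ near $x$. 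No triviality at $x$ is assumed or needed.

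Your ``Bertini'' replacement for the Colliot-Th\'el\`ene--Sansuc purity step also does not close on itself once the above assumption is removed. You propose enlarging $Y^{ext}$ by a horizontal hypersurface $H$ with $H_v$ containing the offending divisors of $\Pro^n_v$. If the bad locus were a fixed closed set $W$ disjoint from $x$, this would work as you intend: $W\cap(\Aff^n_V-Y^{ext})$ loses its fibrewise codimension-one components and the residue satisfies condition $(*)$. But if instead one must reason as the paper does --- with the bad locus arising as $Y^{ext}\cap Z$ where $Z$ passes through $x$ --- the enlargement introduces $H\cap Z$, whose closed fibre $H_v\cap Z_v$ again contains $\Gamma_v$ (a divisor of $\Pro^n_v$), since $\Gamma_v\subseteq H_v$ and $\Gamma_v\subseteq Z_v$ by construction. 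One is back where one started. This is exactly the obstacle the paper's appeal to \cite[Thm.~6.13]{C-T/S} is there to overcome: purity for $H^1_{et}(-,\bG)$ over the two-dimensional regular semi-local ring at the generic points of $M\cup\Gamma_v$ lets one extend $E'$ across that locus rather than remove it.

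Two smaller points. First, the assertion ``because $x$ is closed it lies in $\Aff^n_v$'' is false: $\Aff^n_V$ has closed points in the generic fibre (e.g.\ the maximal ideal $(pT-1)$ of $\Z_p[T]$), and it is only for $\Pro^n_V$, which is proper over $V$, that closed points must lie over $v$. Second, the phrase ``$\phi_0$ extends to a trivialization of $E$ on $\Spec\cO_{\Pro^n_V,\xi}$'' overstates what Nisnevich's theorem gives: it yields \emph{some} trivialization there, which generally differs from $\phi_0$ by an element of $\bG(K(\Pro^n_V))$ that need not lie in $\bG(\cO_{\Pro^n_V,\xi})$, so the two trivializations may not patch to a single $\phi$ on $U\cup U_\xi$. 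Separatedness of $\bG$ lets you compare two sections that already agree at $\eta_0$; it does not let you renormalize one of them to achieve this agreement.
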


\begin{proof}[Proof of Theorem \ref{E_circ}]
Let $\bG$, $Y\subset \Pro^n_V$, $x\in \Aff^n_V-Y$ and $E$ be as in Theorem \ref{E_circ}.
The $\bG$-bundle $E$ over $\Aff^n_V-Y$ is generically trivial. By the Nisnevich theorem
there exists a divisor $Z$ in $\Pro^n_V$ which does not contain the closed fibre $\Pro^n_v$
and such that the $E$ is trivial over $(\Aff^n_V-Y)-Z$.
Replacing $Y$ with a divisor $Y^{ext}$ in $\Pro^n_V$, containing $Y$
we may and will suppose that $x\notin Y^{ext}$, $\Pro^{n-1}_V\subset Y^{ext}$ and for
each irreducible component $Z_i$ of the divisor $Z$ the point $x$ is in $Z_i$.
In this case $Y^{ext}\cap Z=M\cup \Gamma_v$, where $M$ subjects the condition (*) as in Agreement \ref{cond_*}
and $\Gamma_v$ is a divisor in $\Pro^n_v$.
Clearly, there exists a $\bG$-bundle $E'$ over
$\Pro^n_V-(Y^{ext}\cap Z)=\Pro^n_V-(M\cup \Gamma_v)$,
which coincides with
$E|_{\Aff^n_V-Y^{ext}}$ over $\Aff^n_V-Y^{ext}=\Pro^n_V-Y^{ext}$ and trivial over $\Pro^n_V-Z$.

Let $S$ be the set of all generic points of the closed subset
$M\cup \Gamma_v$ of $\Pro^n_V$. Each element of $S$ is a codimension two point in $\Pro^n_V$.
Let $\bW\subset \Pro^n$ be the semi-local subscheme whose closed points are exactly the set $S$.
Note that $\bW-S=\bW-(M\cup \Gamma_v)$.

By \cite[Thm. 6.13]{C-T/S} the restriction map
$$H^1_{et}(\bW,\bG)\to H^1_{et}(\bW-S,\bG)$$
is a bijection.
Thus, there is a $\bG$-bundle $E_{\bW}$ over $\bW$ such that
\begin{equation}\label{C-T-S}
E_{\bW}|_{\bW-S}=E'|_{\bW-S}.
\end{equation}
The equality $\bW-S=\bW-(M\cup \Gamma_v)$ and the isomorphism \eqref{C-T-S} show that there exists
a Zariski neiborhood $\cal W$ of the subscheme $\bW$ in $\Pro^n_V$ and a $\bG$-bundle
$E_{\cal W}$ over $\cal W$ such that the $\bG$-bundles
$E_{\cal W}|_{{\cal W}-(M\cup \Gamma_v)}$ and $E'|_{{\cal W}-(M\cup \Gamma_v)}$
are isomorphic.
Thus, there is certain
\begin{equation}\label{E_circ_over_P_circ}
\bG \text{-bundle} \ E^{\circ} \ \text{over} \ [\Pro^n_V-(M\cup \Gamma_v)]\cup {\cal W}
\end{equation}
enjoing conditions
(1) $E^{\circ}|_{\Pro^n_V-(M\cup \Gamma_v)}\cong E'|_{\Pro^n_V-(M\cup \Gamma_v)}$
and
(2) $E^{\circ}|_{\cal W}\cong E_{\cal W}$ as $\bG$-bundles. \\\\
Clearly, the open subset $[\Pro^n_V-(M\cup \Gamma_v)]\cup {\cal W}$ is of the form
${\Pro}^{\circ,n}_V$ in $\Pro^n_V$. It is straightforward to check that
the divisor $Y^{ext}$, containing $Y$, our open subset
${\Pro}^{\circ,n}_V:=[\Pro^n_V-(M\cup \Gamma_v)]\cup {\cal W}$
in $\Pro^n_V$,
the $\bG$-bundle $E^{\circ}$ over this ${\Pro}^{\circ,n}_V$
enjoy the conditions (a),(b) and (c) of the theorem.

\end{proof}

\begin{thm}\label{thm_1}{\rm
Let $\bG$ be a reductive $D$-group scheme, $E^{\circ}$ be a $\bG$-bundle over ${\Pro}^{\circ,n}_V$
for an open ${\Pro}^{\circ,n}_V$ in $\Pro^n_V$. If $E^{\circ}$ is trivial at the generic point,
then $E^{\circ}$ is trivial locally for the Zariski topology.
}
\end{thm}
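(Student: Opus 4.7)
The plan is to reduce Theorem \ref{thm_1} to showing that for every $p \in \Pro^{\circ,n}_V$, the restriction of $E^{\circ}$ to $\Spec \cO_{\Pro^n_V,p}$ is trivial. Since $p \notin M$, this local ring coincides with $\cO_{\Pro^{\circ,n}_V,p}$, and it is a regular local ring essentially smooth over $D$. In this way, Theorem \ref{thm_1} becomes an essentially smooth instance of the Grothendieck--Serre conjecture for the local rings of $\Pro^n_V$.

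I would first observe that generic triviality together with the Nisnevich theorem (already used in the proof of Theorem \ref{E_circ}) produces a divisor $Z \subset \Pro^n_V$ not containing the closed fibre $\Pro^n_v$ and such that $E^{\circ}$ is trivial on $\Pro^{\circ,n}_V - Z$. If $p \notin Z$ there is nothing to prove, so I may assume $p \in Z$ and work locally around the intersection $Z \cap \Spec \cO_{\Pro^n_V,p}$.

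Next, I would invoke the DVR version of the Lindel--Ojanguren--Gabber presentation lemma announced in the abstract to produce a Nisnevich neighbourhood of $Z \cap \Spec \cO_{\Pro^n_V,p}$ realized as a Zariski open inside $\Aff^1_B$ for a suitable semi-local $D$-algebra $B$. Under such a presentation, $E^{\circ}$ pulls back to a generically trivial $\bG$-bundle on an open of $\Aff^1_B$ that is trivial away from a divisor finite and \'etale over $\Spec B$. Triviality of such bundles on $\Aff^1_B$ is known from the affine-line results for reductive group schemes in the DVR setting, and via Nisnevich descent this triviality propagates back to the bundle on $\Spec \cO_{\Pro^n_V,p}$, yielding the desired Zariski-local triviality of $E^{\circ}$.

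The main obstacle is setting up the presentation lemma in the mixed-characteristic context, so that it simultaneously controls the generic fibre (characteristic zero) and the closed fibre (positive characteristic) of $V$, while being compatible with the divisor $Z$ and with the action of $\bG$. Once that lemma is in place, the reduction to $\Aff^1_B$ and the ensuing Nisnevich descent are fairly routine; the novelty lies essentially in producing the presentation over a DVR rather than over a field.
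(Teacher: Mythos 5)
Your high-level strategy --- reduce to a local statement, use the Nisnevich theorem to get a trivializing divisor $Z$, apply a geometric presentation lemma, and finish with relative-affine-line results plus Nisnevich descent --- is indeed the shape of the paper's argument for the \emph{finite} residue field case. But two things go wrong in the details, and together they leave a genuine gap.

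First, you invoke the wrong presentation lemma. The Lindel--Ojanguren--Gabber lemma announced in the abstract (Theorem~\ref{O-G-L-DVR}, or its geometric form Theorem~\ref{geometric_form_of_O-G-L}) produces an \'etale inclusion $\cO_{\Aff^n_V,y}\hookrightarrow A_S$ together with an elementary distinguished square over $\cO_{\Aff^n_V,y}$; it realizes the local ring of a smooth $D$-scheme as a Nisnevich neighbourhood of a local ring of $\Aff^n_V$, and it plays its role in the proof of the Main Theorem~\ref{Main}, not here. What Theorem~\ref{thm_1} needs is an $\Aff^1$-type fibration over the local ring $U=\Spec\cO_{\Pro^{\circ,n}_V,x}$, i.e.\ the ``nice triple''/elementary-fibration presentation of Theorem~\ref{Major_1}: an elementary distinguished square over $U$ with $\Aff^1\times U$ on one side, together with a section $\delta$ and a monic polynomial $h$ so that the bundle extends from $\Aff^1\times U$ to $\Pro^1\times U$ trivially at infinity. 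These are genuinely different results. An \'etale map down to a local ring of $\Aff^n_V$ does not, by itself, produce the one-dimensional fibration that lets you quote the relative-projective-line theorems \cite[Thm.~1.12, Cor.~1.14]{PSt2}.

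Second, you miss a case split that the paper cannot avoid. Theorem~\ref{Major_1} --- the presentation lemma you would actually need --- is only established for \emph{finite} residue field $k(v)$ (its construction uses Poonen's Bertini theorem and the counting argument of the ``nice triple'' machinery, which require a finite ground field, see Lemma~\ref{SmallAmountOfPoints_2} and Definition~\ref{Conditions_1*and2*}). For infinite $k(v)$ the paper runs a completely different and more geometric argument: it picks a $V$-point $\mathbb V\subset H\cap\Pro^{\circ,n}_V$ avoiding both $Z$ and the ``bad'' locus $M$ (this requires $k(v)$ infinite), blows up at $\mathbb V$ to get $\Pro^1_S$ over $S=\Spec\cO_{\Aff^{n-1}_V,q(x)}$, pulls $E^\circ$ back to $\Pro^1_S$, checks it is trivial at the infinity section, and invokes \cite[Cor.~1.14]{PSt2}. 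Your uniform ``presentation $\Rightarrow \Aff^1_B \Rightarrow$ affine-line result'' plan would require the nice-triple presentation to hold over arbitrary residue fields, which the paper does not provide; either you must supply a proof of that (nontrivial and not in the paper), or you must, as the authors do, split into the infinite case (blow-up/projection) and the finite case (nice triple). As written, your proposal covers neither case completely.
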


In the case of finite field $k(v)$ our proof of Theorem \ref{thm_1}
uses the following purely
{\it geometric} result stated right below
and proven in
\cite[Theorem 1.4]{P3}.
In the partial case of $X=\Pro^{n}_V$ this {\it geometric} result is proved in
\cite[Thm. 1.2]{GPan} using a different approach.

\begin{thm}
\label{Major_1}{\rm
Suppose the field $k(v)$ is finite.
Let $X=\Pro^{\circ,n}_V$. That is the closed subset $M=\Pro^n_V-X$ enjoys the condition (*)
as in Agreement \ref{cond_*}. Let $x\in X_v$ be a closed point. Let
$Z\subseteq {\Pro^n_V}$ be a divisor not containing $\Pro^n_v$
such that $x\in Z$.
Write $\mathcal O=\mathcal O_{X,x}$ and $U=Spec(\mathcal O)$.
Then there is a monic polynomial $h\in \cO[t]$,
a commutative diagram
of $V$-schemes (with an irreducible affine $U$-smooth scheme $Y\xra{pr_U \circ \tau} U$)
of the form
\begin{equation}
\label{SquareDiagram2_0}
    \xymatrix{
       (\Aff^1 \times U)_{h}  \ar[d]_{inc} && Y_h:=Y_{\tau^*(h)} \ar[ll]_{\tau_{h}}  \ar[d]^{inc} \ar[rr]^{(p_X)|_{Y_h}} && X-Z  \ar[d]_{inc}   &\\
     (\Aff^1 \times U)  && Y  \ar[ll]_{\tau} \ar[rr]^{p_X} &&  X                                     &\\
    }
\end{equation}
and a $V$-morphism $\delta: U\to Y$, which enjoy the following conditions
\begin{itemize}
\item[\rm{(i)}]
the left hand side square
is an elementary {\bf distinguished} square in the category of affine $U$-smooth schemes in the sense of
\cite[Defn.3.1.3]{MV};
\item[\rm{(ii)}]
the morphism $\delta$ is a section of the morphism $pr_U\circ \tau$ and
$p_X\circ \delta=can: U \to X$, where $can$ is the canonical morphism;
\item[\rm{(iii)}]
$\tau\circ \delta=i_0: U\to \Aff^1 \times U$ is the zero section
of the projection $pr_U: \Aff^1 \times U \to U$;
\end{itemize}
}
\end{thm}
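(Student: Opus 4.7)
The plan is to prove Theorem \ref{Major_1} by a Gabber-style geometric presentation argument adapted to the DVR base $V$, with extra care to handle the finite residue field $k(v)$. I would organize the argument in three main stages.

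First, I would construct a suitable generic $V$-linear projection. Fix homogeneous coordinates on $\Pro^n_V$ so that $x$ lies in a chosen affine chart and $Z$ is the closure of an affine hypersurface. Using condition $(*)$ on $M = \Pro^n_V - X$ together with a Bertini-type avoidance statement valid over a finite residue field (the technical heart of \cite[Thm.\,1.4]{P3}; over an infinite residue field classical Bertini suffices), pick a codimension-$2$ linear subspace $L \subset \Pro^n_V$ such that $L$ is disjoint from $M$ on both fibers --- possible exactly because $\dim M_v, \dim M_\eta \le n-2$ --- and $L$ avoids $x$ together with a suitable finite bad-locus on $Z$. The resulting projection $\pi \colon \Pro^n_V - L \to \Pro^1_V$ then restricts to a morphism defined on all of $X$, finite near $x$, étale at $x$, and with the property that $Z$ is cut out near $x$ by a monic polynomial $h \in \cO[t]$ where $t$ is the affine coordinate on $\Pro^1_V$.

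Second, I would extract $Y$. Combining $\pi$ with a complementary projection produces a morphism $X \to \Aff^1 \times \Aff^{n-1}_V$ which is finite near $x$. Base-changing along the canonical map $U = \Spec(\cO) \to \Aff^{n-1}_V$, passing to the connected component of the preimage of $x$, and Zariski-shrinking yields the irreducible $U$-smooth curve $Y$ together with the étale morphism $\tau \colon Y \to \Aff^1 \times U$ and the structure morphism $p_X \colon Y \to X$ of the diagram. The requirement that $\tau$ restrict to an isomorphism on the closed complements --- equivalently, that the left square be an elementary distinguished square in the sense of \cite[Defn.\,3.1.3]{MV} --- is secured by taking $Y$ to be exactly the étale component of the fibre product containing the preimage of the locus $\{h = 0\}$ through $x$. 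Third, after arranging at the outset a translation of $t$ so that $can^*(t)$ lies in the maximal ideal of $\cO$, the morphism $can \colon U \to X$ admits a unique lift $\delta \colon U \to Y$ along $p_X$ whose image lies over the zero section $i_0(U) \subset \Aff^1 \times U$. By construction $\tau \circ \delta = i_0$ and $p_X \circ \delta = can$, so (ii) and (iii) hold, while (i) has already been secured.

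The main obstacle is the first step. Over an infinite residue field, classical Bertini produces $L$ effortlessly, but over a finite field Bertini fails, and one needs a Poonen-type density argument to guarantee the required avoidance and transversality conditions on both fibers simultaneously --- this is precisely what \cite[Thm.\,1.4]{P3} provides. Condition $(*)$ is essential, not cosmetic: codimension $\ge 2$ on each fiber is exactly what allows a codimension-$2$ linear centre $L$ to avoid $M$ on both fibers at once, ensuring that the projection $\pi$ is defined uniformly on all of $X$ over $V$. The rest of the argument --- forming $Y$ as an étale component, arranging the monic polynomial, and producing the section --- is standard once this geometric input is in place.
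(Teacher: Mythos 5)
Your proposal misplaces where the finite residue field difficulty actually lives, and as a result has a genuine gap at the step where you claim to secure the elementary distinguished square.

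You attribute the finite-field obstacle solely to the choice of the projection centre $L$, to be handled by a Poonen-style Bertini avoidance. But the paper (and the machinery in \cite{P1},\cite{P3} it relies on) points to a more stubborn problem, made explicit just before Theorem \ref{ElementaryNisSquare_1}: if $k(v)$ has, say, two elements and the closed point of $U$ is $k(v)$-rational, then a finite $U$-scheme $\mathcal Z'$ whose closed fibre contains three $k(v)$-rational points admits \emph{no} closed embedding into $\Aff^1\times U$ whatsoever. No choice of projection, however generic or however carefully tuned by a Poonen-type argument, can repair this: it is a counting obstruction on closed points of small residue degree, not a transversality or avoidance issue. Your claim that the elementary distinguished square in (i) is ``secured by taking $Y$ to be exactly the \'etale component of the fibre product containing the preimage of $\{h=0\}$ through $x$'' is therefore false in general; with your $Y$ a component of $X\times_{\Aff^{n-1}_V}U$, the finite $U$-scheme $\{\tau^*(h)=0\}$ need not map isomorphically onto $\{h=0\}$, and when the residue field is small enough, it provably cannot.

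What the paper does instead, and what your sketch omits entirely, is a further modification of the ``naive'' base change by a finite \'etale cover. Starting from the basic nice triple $(\mathcal X,f,\Delta)$ over $U$ built from the elementary fibration (Proposition \ref{BasicTripleProp_1}), the paper applies Theorem \ref{ThEquatingGroups_1} (based on Lemma \ref{SmallAmountOfPoints_2}) to produce a morphism $\theta:(\mathcal X',f',\Delta')\to(\mathcal X,f,\Delta)$ of nice triples, with $\theta$ \'etale, so that the new triple satisfies conditions $(1^*)$ and $(2^*)$ of Definition \ref{Conditions_1*and2*}: for each $d$ the number of degree-$d$ closed points in $\mathcal Z'_w$ is bounded by the number of degree-$d$ closed points in $\Aff^1_w$, and $\Delta'(w)$ is the unique $k(w)$-rational point of $\mathcal Z'_w$. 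Only after this ``equating'' step can Theorem \ref{ElementaryNisSquare_1_triples} produce a finite surjective $\sigma:\mathcal X'\to\Aff^1\times U$ whose restriction to $\mathcal Z'$ is a closed embedding, which is exactly what makes the left-hand square an elementary distinguished square after the additional shrinking by $g$ in Corollary \ref{ElementaryNisSquareNew_1}. This \'etale refinement of the triple, not a better Bertini, is the real content of the finite-residue-field case, and it is missing from your argument.

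There is also a more structural deviation from the paper's route that is worth noting even aside from the gap: the paper does not start from a $V$-linear projection $\Pro^n_V-L\to\Pro^1_V$. It first uses a relative elementary fibration (Theorem \ref{Very_General_Diagram}, via \cite[Thm.\,1.4]{GPan2}, extending Artin's construction to the DVR base) to produce a relative curve $\dot q_S:\dot X_S\to S$ near $x$ with $\dot Z_S$ finite over $S$, then finitely maps $\dot X_S\to\Aff^1\times S$ as in Proposition \ref{CartesianDiagram}, and only then base-changes to $U$. Your linear projection scheme is more in the spirit of \cite[Thm.\,1.2]{GPan} (which the paper says proves the $X=\Pro^n_V$ case ``using a different approach''), but you would still need the nice-triples/\'etale-cover refinement to get (i) over a finite residue field, and your write-up does not supply it.
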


\begin{proof}[Proof of Theorem \ref{thm_1}]
First prove the theorem in the case of the infinite field $k(v)$.
It suffices to prove that for each closed point $x\in {\Pro}^{\circ,n}_V$ the
$\bG$-bundle $E^{\circ}$ is trivial in a Zariski neighborhood of the point $x$.

The $E^{\circ}$ is trivial at the generic point. Thus, by the Nisnevich theorem
there exists a divisor $Z\subseteq {\Pro^n_V}$ not containing $\Pro^n_v$ such that
$E^{\circ}|_{\Pro^{\circ,n}_V-Z}$ is trivial. After a linear change of coordinates
we may suppose that $H_v\nsubseteq Z$ and $x\notin H_v$, where $H=\{T_0=0\}$.

Put $M=\Pro^n_V-\Pro^{\circ,n}_V$. Since $k$ is infinite there exists a $V$-point $\mathbb V\subset H$
such that \\
(1) $\mathbb V\subset \Pro^{\circ,n}_V\cap H$ and $\mathbb V \cap Z=\emptyset$; \\
(2) for the $V$-linear projection $q_{\mathbb V}: \Pro^n_V-\mathbb V \to \Pro^{n-1}_V$
one has $q^{-1}_{\mathbb V}(q_{\mathbb V}(x))\cap M=\emptyset$.

After another linear change of coordinates we may suppose that
$\mathbb V=[0:...:1]_V$. Write shortly $q$ for $q_{\mathbb V}$.
Since $\mathbb V=[0:...:1]_V$ one has $q(x)\in \Aff^{n-1}_V$.
Put $S=Spec ({\cal O}_{\Aff^{n-1}_V,q(x)})$. The Blowing up $\Pro^{\circ,n}_V$ at $\mathbb V$
gives rise to a commutative diagram of the form
\begin{equation}
\label{SquareDiagram}
    \xymatrix{
    \Aff^1_S \ar[drr]_{q}\ar[rr]^{j}&&
\Pro^1_S \ar[d]_{\bar q}  && \infty\times S \ar[ll]_{i}\ar[lld]^{id} &\\
     && S  &\\    },
\end{equation}
where $q$ and $\bar q$ are the projections to $S$. Furthermore there is
the blowing down morphism
$\sigma: (\Pro^{\circ,n}_V)^{\wedge}_{\mathbb V} \to \Pro^{\circ,n}_V$.
{\it Write still $\sigma$ for its restriction to} $\Pro^1_S$.

Then $\sigma \circ j: \Aff^1_S\to \Pro^{\circ,n}_V$ is the inclusin
$in: \Aff^1_S\to \Pro^{\circ,n}_V$. By the definition of $S$ the point $q(x)$ is its closed point.
Thus, $x$ is in $\Aff^1_S$.

Consider the $\bG$-bundle $\sigma^*(E^{\circ})$ over $\Pro^1_S$.
Since $Z\cap \mathbb V=\emptyset$ and $E^{\circ}$ is trivial away of $Z$
it follows that
the $\bG$-bundle $\sigma^*(E^{\circ})|_{\infty\times S}$ is {\it trivial}.
By \cite[Corollary 1.14]{PSt2} the $\bG$-bundle $\sigma^*(E^{\circ})$
over $\Pro^1_S$
is Zariski
locally trivial.

Since
$\sigma \circ j=in: \Aff^1_S\hra \Pro^{\circ,n}_V$ it follows that
$j^*(\sigma^*(E^{\circ}))=E^{\circ}|_{\Aff^1_S}$. Thus, $E^{\circ}|_{\Aff^1_S}$
is Zariski locally trivial. So, $E^{\circ}$ is trivial in a Zariski
neighborhood of the point $x$.\\\\
Now prove the finite $k(v)$ field case of Theorem \ref{thm_1}.
It suffices to prove that for each closed point $x\in {\Pro}^{\circ,n}_v$ the
$\bG$-bundle $E^{\circ}$ is trivial in a Zariski neighborhood of the point $x$.
The $\bG$-bundle $E^{\circ}$ over $\Pro^{\circ,n}_V$ is trivial at the generic point.
Thus, by the Nisnevich theorem
there exists a divisor $Z\subseteq \Pro^n_V$ not containing $\Pro^n_v$ such that
$E^{\circ}|_{\Pro^{\circ,n}_V-Z}$ is trivial.
If $x\notin Z_v$, then there is nothing to prove.

So, we suppose below in the proof that $x\in Z_v$. Put $X=\Pro^{\circ,n}_V$.
The divisor $Z$ in $\Pro^n_V$ does not contain $\Pro^n_v$ and $x\in Z_v$.
By Theorem \ref{Major_1} there is a monic polynomial $h\in \cO[t]$ and
a commutative diagram of $V$-schemes of the form \eqref{SquareDiagram2_0}
enjoying conditions (i)--(iii).

The $\bG$-bundle $E^{\circ}$ over $X$ is such that its restriction to $X-Z$ is trivial.
Write ${\cal E}$ for the $\bG$-bundle $p^*_X(E^{\circ})$ over $Y$. Then
its restriction to the open subset $Y_h:=Y_{\tau^*(h)}$ of $Y$ is trivial.
The left hand side square in the diagram \eqref{SquareDiagram2_0}
is an elementary {\bf distinguished} square in the category of affine $U$-smooth schemes in the sense of
\cite[Defn.3.1.3]{MV}. Thus, there is a $\bG$-bundle $E_t$ over $\Aff^1\times U$
such that $E_t|_{(\Aff^1\times U)_h}$ is trivial and $\tau^*(E_t)\cong {\cal E}$.

The polynomial $h\in \cO[t]$ is monic. Thus the closed subset $\{h=0\}$ of $\Aff^1\times U$
is closed in $\Pro^1\times U$. So, its complement is open in $\Pro^1\times U$.
Clearly, there is a $\bG$-bundle $\bar E_t$ over $\Pro^1\times U$ such that
$\bar E_t|_{\Aff^1\times U}=E_t$ and $\bar E_t|_{\Pro^1\times U-\{h=0\}}$ is trivial.
Particularly, the $\bG$-bundle $\bar E_t|_{\infty \times U}$ is trivial.
By \cite[Thm. 1.12]{PSt2} the $\bG$-bundle $\bar E_t|_{0 \times U}$ is trivial.

Thus,
the $\bG$-bundle $E_t|_{0 \times U}$ is trivial.
The properties (ii) and (iii) of the diagram \eqref{SquareDiagram2_0}
show that $E_t|_{0 \times U}\cong E|_U$ as $\bG$-bundles.
Thus, the $\bG$-bundle $E|_U$ is trivial.

The proof of the finite $k(v)$ field case of Theorem \ref{thm_1} is completed.
Theorem \ref{thm_1} is proved.

\end{proof}

\section{Proof of Theorem \ref{Main}}\label{reduction}
We will use in the proof the following {\it terminology}. Namely,
by a {\it localization} of an affine scheme $S=(Spec\ A,\cO_{Spec\ A})$ we mean a subscheme $\cal S$ of $S$ of the form
$(Spec\ A_M,\cO_{Spec\ A_M})$, where $M\subset A$ is a multiplicative system. Clearly, for each point $s\in \cal S$ the local rings
$\cO_{S,s}$ and $\cO_{{\cal S},s}$ coincide.

We also will use in the proof the notion of
{\it an elementary distinguished square}
in the sense of
\cite[Defn.3.1.3]{MV}.

\begin{proof}[Proof of Theorem \ref{Main}]
Let $\pi \in D$ be a generator of the maximal ideal of $D$.
Using the D.Popescu theorem \cite{Po} and the limit arguments
we may suppose that the $D$-algebra $R$ is $D$-smooth
{\it integral domain}.
Furthermore we may and will suppose that
the $\bG$-bundle $E$ is defined over $R$ and is {\it trivial} over the fraction field of $R$.
It is sufficient to prove that the $\bG$-bundle $E$ is locally trivial for the Zariski topology on $\text{Spec} R$.
Put $X=\text{Spec}\ R$.
There are two cases: either the closed fibre $X_v$ of $X$ is empty or $X_v$ is non-empty.

If $X_v=\emptyset$, then our $R$ contains the fraction field $L$ of $D$. The field $L$ is infinite.
Thus, the $\bG$-bundle $E$ is locally trivial for the Zariski topology on $X$
by \cite[Theorem 1.1]{FP}.

If $X_v\neq \emptyset$, then by \cite[Theorem 1]{NG}
there is a closed subset $Z$ of {\it pure codimension one} in $X$
such that
$Z$ does not contain any component of $X_v$
and the $\bG$-bundle $E|_{X-Z}$ is trivial.
It is sufficient to check for each closed point $x\in X$ that
the $\bG$-bundle $E$ is trivial in a Zariski neighborhood of the point $x$.
If $x\notin Z$, then there is nothing to prove. Take now $x\in Z$.
If $x\notin Z_v$, then $x\in Z_{\eta}\subset X_{\eta}$.
Note that $X_{\eta}$ is open in $X$.
Thus, using again
\cite[Theorem 1.1]{FP}
we see that the $\bG$-bundle $E$ is trivial in a Zariski neighborhood of the point $x$.
It remains to prove that for a closed point $x\in Z_v$
the $\bG$-bundle $E$ is trivial over $Spec\ \cO_{X,x}$.
This is done in the rest of the proof.


Regard the closed subset $Z$ as a Cartier divisor in $X$.
By Theorem \ref{geometric_form_of_O-G-L}
there are a
localization $X'$ of the scheme $X$ containing the point $x$,
a closed point $y\in \Aff^n_v$, the local subscheme
${\cal W}:=Spec\ \cO_{\Aff^n_V,y}$ of the $\Aff^n_V$,
an element $g\in m_y\subset \cO_{\Aff^n_V,y}$
and an elementary distinguished square of the form
\begin{equation}
\label{Nisn_square}
    \xymatrix{
X'_{\tau^*(g)}  \ar[d]^{\tau_g}  \ar[rr]^{In} && X' \ar[d]^{\tau}  &\\
{\cal W}_g  \ar[rr]^{in} &&   \cal W   &,\\    }
\end{equation}
in the category $Sm/{\cal W}$ of $\cal W$-smooth schemes enjoying the following properties: \\
(1) $\tau(x)=y$; \\
(2) $\{\tau^*(g)=0\}=X'\cap Z$ as the Cartier divisors of the $X'$.\\
Put $Z'=X'\cap Z$ and recall that
the $\bG$-bundle $E|_{X-Z}$ is trivial. Thus,
the $\bG$-bundle $E|_{X'-Z'}$ is trivial as well.
Since the square \eqref{Nisn_square}
is an elementary distinguished square, it follows that there is
a $\bG$-bundle $\cE$ over $\cal W$ such that
$\tau^*(\cE)=E|_{X'}$ and the bundle $\cE|_{{\cal W}_g}$ is trivial
(see \cite[Proposition 2.6]{C-TO} for details).

Futhermore, we may and will suppose that the $\bG$-bundle $\cE$ is defined over a
principal open subset $\Aff^n_V-Y$ ($Y$ is a divisor not containing the point $y$).
Clearly, the $\bG$-bundle $\cE$ over $\Aff^n_V-Y$ is generically trivial.
Now by Theorem \ref{E_circ}
there exist a divisor $Y^{ext}$ in $\Pro^n_V$, containing $Y$, an open
${\Pro}^{\circ,n}_V$ in $\Pro^n_V$, a $\bG$-bundle $E^{\circ}$ over ${\Pro}^{\circ,n}_V$ such that \\
(a) $y\in \Aff^n_V-Y^{ext}$; \\
(b) $\Aff^n_V-Y^{ext}\subset {\Pro}^{\circ,n}_V$; \\
(c) the bundles $E^{\circ}|_{\Aff^n_V-Y^{ext}}$ and $\cE|_{\Aff^n_V-Y^{ext}}$ {\it are isomorphic}.\\
Since the $\bG$-bundle $\cE$ is generically trivial the property (c) yields that the $\bG$-bundle $E^{\circ}$ is generically trivial as well.
By Theorem \ref{thm_1} the $\bG$-bundle $E^{\circ}$ is Zariski locally trivial.
Thus, the $\bG$-bundles $\cE$ is trivial in a neighborhood of the point $y$.
Since $\tau^*(\cE)=E|_{X'}$ and $\tau(x)=y$
it follows that the $\bG$-bundles $E$ is trivial in a neighborhood of the point $x$.
That is the $\bG$-bundles $E|_{Spec\ \cO_{X,x}}$ is trivial.
The theorem is proved.

\end{proof}

\section{Gabber--Lindel--Ojanguren's lemma over a DVR}

\begin{notation}\label{general}{\rm
Let $D$ be a D.V.R. of mixed characteristic $(p,0)$.
Let $m$ be the maximal ideal of $D$. Put $k(v)=A/m$. It is a characteristic $p$ field.
Let $K$ be the fraction field of $D$. It is a characteristic zero field.

In this notes by a $D$-algebra we mean a $D$-algebra which is a Noetherian as a ring.
For a $D$-algebra $A$ write $\bar A$ for $A/mA$. Recall that a
{\it $D$-smooth} algebra $A$
is necessary a finitely generated $D$-algebra.
}
\end{notation}

The main result of these section is this.

\begin{thm}\label{O-G-L-DVR}{\rm
Let $A$ be a smooth $D$-algebra with $\bar A\neq (0)$ and which is an integral domain.
Let $m_x$ be a maximal ideal in $A$ with $\bar m_x\neq \bar A$ and let $f\in m_x$.
Suppose $\bar f \in \bar A$ is not a {\it zero divisor}.

Then there is a multiplicative subsystem $S\subset A-m_x$,
a closed point $y\in \Aff^n_v$, an element $g\in m_y$
and a $D$-algebra inclusion
$inc: \cO_{\Aff^n_V,y}\hra A_S$ which enjoys the following properties: \\
(0) $m_y=(m_x)_S \cap \cO_{\Aff^n_V,y}$; \\
(1) the inclusion $inc$ is \'{e}tale; \\
(2) $(g)=(f)\cap \cO_{\Aff^n_V,y}$ in $\cO_{\Aff^n_V,y}$; \\
(3) the induced homomorphism
$\cO_{\Aff^n_V,y}/g\cO_{\Aff^n_V,y}\to A_S/gA_S$
is an isomorphism.
}
\end{thm}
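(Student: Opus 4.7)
The plan is to reduce to the classical Lindel--Ojanguren--Gabber lemma for the smooth $k(v)$-algebra $\bar A = A/\pi A$ and then lift the resulting \'etale presentation to $A$ using $D$-flatness. After replacing $A$ by a localization at a suitable multiplicative subset of $A\setminus m_x$, I work locally at $m_x$. The ring $\bar A$ is $k(v)$-smooth, and $\bar f \in \bar m_x$ is a non-zerodivisor by hypothesis; so a Gabber--Lindel--Ojanguren presentation lemma over $k(v)$ (in a version valid for arbitrary, possibly finite, residue field --- following methods akin to those used for Theorem \ref{Major_1}) produces a multiplicative subset $\bar S\subset \bar A\setminus \bar m_x$, a closed point $\bar y\in \Aff^n_v$, and an \'etale $k(v)$-algebra inclusion $\cO_{\Aff^n_v,\bar y}\hra \bar A_{\bar S}$ satisfying the characteristic-$p$ analogs of (0)--(3), with a distinguished generator $t_1$ mapping to $\bar f$.

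Next I lift. Choose lifts $t_2,\dots,t_n\in A$ of the constructed elements and set $t_1 := f$; lift $\bar S$ to a multiplicative subset $S\subset A\setminus m_x$ (enlarging it as needed so that the relevant elements become invertible in $A_S$). The tuple $(t_1,\dots,t_n)$ determines a $D$-algebra homomorphism $D[t_1,\dots,t_n]\to A_S$. The closed point $\bar y$ has a unique lift to a closed point $y\in \Aff^n_v\subset \Aff^n_V$ with $m_y=(\pi)+(\text{lifts of generators of } \bar m_y)$; localizing at $y$ yields the desired local $D$-algebra map $inc:\cO_{\Aff^n_V,y}\to A_S$. Set $g:=t_1$. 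Condition (0) is then immediate, since $y$ lies over $\bar y$ and $m_x$ lies over $\bar m_x$. \'Etaleness of $inc$ is checked via the local criterion: both rings are $D$-flat, $inc\otimes_D k(v)$ is \'etale by construction (hence flat), so by the local flatness criterion (EGA IV, 11.3.10) $inc$ itself is flat; Nakayama applied to the finitely generated module $\Omega^1_{A_S/\cO_{\Aff^n_V,y}}$, which vanishes modulo $\pi$ at $m_x$, gives unramified-ness. Conditions (2) and (3) then propagate from characteristic $p$: the hypothesis that $\bar f$ is a non-zerodivisor ensures $\cO_{\Aff^n_V,y}/g$ and $A_S/g A_S$ are $D$-flat, so an isomorphism of their reductions modulo $\pi$ (from the characteristic-$p$ statement) lifts to an isomorphism over $D$ via a Nakayama-type argument.

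The main obstacle is securing the input Gabber--Lindel--Ojanguren lemma in characteristic $p$ with the flexibility required: a version applicable when $\bar f$ is merely a non-zerodivisor (not necessarily part of a regular system of parameters) and valid without restriction on the residue field $k(v)$, including the finite case. Granted this input, the lift to mixed characteristic is largely formal, with $D$-flatness serving as the central technical device throughout; a subsidiary but routine point is organizing the compatible choices of lifts of multiplicative subsets.
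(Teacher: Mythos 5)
Your plan is to reduce modulo $\pi$, invoke a characteristic-$p$ Lindel--Ojanguren--Gabber lemma for $\bar A$ over $k(v)$, and then lift the resulting \'etale presentation to $A$ using $D$-flatness. This is genuinely different from the paper's route: the paper constructs, directly over $V$, a \emph{finite} surjective $V$-morphism $\bar\pi\colon \bar X\to \Pro^{n,w}_V$ into a weighted projective space (Proposition \ref{bar pi}), choosing the defining sections via Poonen-type Bertini so as to control $\bar\pi^{-1}(\bar\pi(x))\cap\bar Z$ and $\pi_v|_{x^{(2)}}$; all of conditions (0)--(3) are then deduced from finiteness of $\pi$ together with Nakayama's lemma (Step~4). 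Nothing in the paper is obtained by lifting from the closed fibre.

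The lift step in your proposal is not, in fact, ``largely formal,'' and indeed condition (3) can fail for the na\"ive lift. Here is a concrete example. Take $D=\ZZ_{(p)}$ with $p>2$ such that $1+4p$ is not a square, $n=1$, and
\[
A=\ZZ_{(p)}[T,X]/(X^2-X-p-T)\cong \ZZ_{(p)}[X],\qquad m_x=(p,T,X),\qquad f=T .
\]
Then $\bar A\cong \mathbb F_p[X]$, $\bar f=X^2-X$ is a non-zero-divisor, and the evident characteristic-$p$ presentation $\mathbb F_p[t]_{(t)}\to \bar A_{(X)}$, $t\mapsto X^2-X$, is \'etale with $\mathbb F_p[t]_{(t)}/(t)\xrightarrow{\ \sim\ }\bar A_{(X)}/(\bar f)$, since $X-1$ is a unit there. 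Lifting as you prescribe, with $t_1:=f$, gives $\cO=\ZZ_{(p)}[t]_{(p,t)}\to A_{m_x}$, $t\mapsto X^2-X-p$, and $g=t$. But
\[
\cO/g\cO=\ZZ_{(p)}\ \hookrightarrow\ A_{m_x}/gA_{m_x}=\bigl(\ZZ_{(p)}[X]/(X^2-X-p)\bigr)_{(p,X)}
\]
is a strict inclusion: the right-hand side contains $X$, which is not rational (no integer solves $X(X-1)=p$ for such $p$), so it is a discrete valuation ring strictly larger than $\ZZ_{(p)}$ with the same uniformizer and residue field. Thus your condition (3) fails even though its reduction modulo $\pi$ holds; the ``Nakayama-type argument'' cannot be applied because the cokernel of $\cO/g\to A_S/gA_S$ is not a finite $\cO/g$-module. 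The root cause is that your \'etale map $\cO\to A_S$ is not finite, whereas the paper's Nakayama step (going from $\pi|_{\cal Z}\colon{\cal Z}\to{\cal S}$ being an isomorphism of reduced schemes to condition (3)) relies precisely on the finiteness of $\pi\colon{\cal U}\to{\cal W}$, which is arranged over $V$ from the outset and cannot be recovered by merely lifting the \'etale data from the closed fibre. Separately, the characteristic-$p$ presentation lemma you treat as a black box (in the finite-residue-field case, with $\bar f$ only a non-zero-divisor) is essentially the main technical content of the paper's Proposition \ref{bar pi}, where it is carried out via Poonen's Bertini over $k(v)$; so even granting a repaired lift, the proposal postpones rather than bypasses the core difficulty.
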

{\it To prove this result it is convenient to restate it in geometric terms as Theorem \ref{geometric_form_of_O-G-L}}.
\begin{setup}\label{setup:basic}{\rm
We write $V$ for $Spec\ D$, $v\in V$ for the closed point of $V$, $\eta\in V$ for the generic point of $V$.
For a $V$-scheme $S$ write $S_v$ for its closed fibre of $S$ and $S_{\eta}$ for its generic fibre.

Let $X$ be an irreducible affine $V$-smooth scheme of pure relative dimension $n\geq 1$
(particularly its closed fibre $X_v$ is non-empty). Let $x\in X_v$ be a closed point.

Let $Z\subset X$ be a closed subset of {\it pure codimension one} in $X$ regarded as a
Cartier divisor of $X$. It is supposed that $x\in Z_v$ and $Z$ does not contain any component of $X_v$.
Let $I\subset \Gamma(X,\cO_X)$ be ideal defining $Z$ in $X$.
{\it This is the set up for these notes}.
}
\end{setup}

\begin{defn}\label{principal_open}{\rm
Let $A$ be a finitely generated reduced $D$-algebra which is an integral domain.
Let $Y$ be the affine scheme $(Spec\ A,\cO_{Spec\ A})$.
By a {\it principal} open subscheme of $Y$ we mean in this notes its open
subscheme of the form $(Y_f,\cO_Y|_{Y_f})$, where $0\neq f\in A$.
Recall that principal open subsets $Y_f$ of $Y$ form a basis of the Zariski topology on $Y$.

By a {\it localization} of an affine scheme $Y=(Spec\ A,\cO_{Spec\ A})$ we mean a subscheme $\cal Y$ of $Y$ of the form
$(Spec\ A_S,\cO_{Spec\ A_S})$, where $S\subset A$ is a multiplicative system.
}
\end{defn}
To state the following result we use the notion of
{\it an elementary distinguished square}
in the sense of
\cite[Defn.3.1.3]{MV}.

\begin{thm}\label{geometric_form_of_O-G-L}{\rm
Let $X/V$, an integer $n\geq 1$, a closed subset $Z\subset X$, a closed point $x\in Z_v$
be as in Setup \ref{setup:basic}.
Then there are a
localization $\cal X$ of the scheme $X$ containing the point $x$,
a closed point $y\in \Aff^n_v$, the local subscheme
${\cal W}:=Spec\ \cO_{\Aff^n_V,y}$ of the $\Aff^n_V$,
an element $g\in m_y\subset \cO_{\Aff^n_V,y}$
and an elementary distinguished square of the form
\begin{equation}
\label{Nisn_Sq_2}
    \xymatrix{
{\cal X}_{\tau^*(g)}  \ar[d]^{\tau_g}  \ar[rr]^{In} && \cal X \ar[d]^{\tau}  &\\
{\cal W}_g  \ar[rr]^{in} &&   \cal W   &,\\    }
\end{equation}
in the category $Sm/{\cal W}$ of $\cal W$-smooth schemes enjoying the following properties: \\
(1) $\tau(x)=y$; \\
(2) $\{\tau^*(g)=0\}={\cal X}\cap Z$ as the Cartier divisors of the $\cal X$.
}
\end{thm}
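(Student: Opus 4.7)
The plan is to prove Theorem \ref{geometric_form_of_O-G-L} by constructing, after a suitable localization of $X$ at $x$, an \'etale $V$-morphism $\tau$ to the local scheme $\cW$ on $\Aff^n_V$ that realizes $Z$ as the pullback of a hypersurface $\{g=0\}$ and restricts to an isomorphism on that hypersurface.

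First, using that $Z$ is a Cartier divisor on $X$ with $x\in Z_v$, I would pick a section $f_1\in \Gamma(X,\cO_X)$ generating the ideal of $Z$ in the local ring $\cO_{X,x}$ (possible since $Z$ is Cartier and $\bar f$ is a non-zero-divisor in $\bar A$). Since $X$ is $V$-smooth of relative dimension $n$ and $x\in Z_v\subset X_v$, I would then choose sections $f_2,\ldots,f_n\in \Gamma(X,\cO_X)$ such that $df_1,\ldots,df_n$ span the relative cotangent space of $X/V$ at $x$; by the Jacobian criterion the resulting $V$-morphism $\varphi\colon X\to \Aff^n_V$ is \'etale at $x$.

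The crucial next step is to arrange, by a Noether-normalization-type argument after replacing $X$ by a principal open containing $x$ and possibly modifying the $f_i$, that $\varphi$ is quasi-finite near $x$, that $\varphi|_{Z}$ is a closed immersion near $x$, and that the scheme-theoretic preimage $\varphi^{-1}(\varphi(Z))$ decomposes as $Z\sqcup Z'$ with $x\notin Z'$. When $k(v)$ is infinite, this is achieved by first embedding $X$ into some $\Aff^N_V$ via global sections and then applying a generic linear projection $\Aff^N_V\to \Aff^n_V$ whose restriction to $X$ is finite near $x$ and sends $Z$ finitely into the hypersurface $\{T_1=0\}\subset\Aff^n_V$, so as to separate $x$ from the other preimages of $\varphi(x)$. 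When $k(v)$ is finite, the linear-projection argument fails and one must instead use Gabber's trick of allowing polynomial modifications of the coordinate functions, adapted so as to control behavior on both fibres of $X\to V$ simultaneously.

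With such $\varphi$ in hand, I would set $y=\varphi(x)\in \Aff^n_v$, let $\cW=\Spec\ \cO_{\Aff^n_V,y}$, let $\cX$ be the connected component of $x$ in $X\times_{\Aff^n_V}\cW$ (which, by quasi-finiteness and the decomposition above, is a localization of $X$ containing $x$), and let $\tau\colon \cX\to \cW$ be the projection. Define $g\in m_y$ to be the image of $f_1$ in $\cO_{\Aff^n_V,y}$. Then $\tau$ is \'etale, $\tau^*(g)=f_1|_{\cX}$ cuts out $\cX\cap Z$ as a Cartier divisor, and the decomposition combined with $\varphi|_Z$ being a closed immersion near $x$ shows that $\tau$ induces an isomorphism $\cX\cap Z\to \cW\cap\{g=0\}$; this is precisely the condition for the square \eqref{Nisn_Sq_2} to be elementary distinguished. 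The main obstacle is the second step, producing coordinates that simultaneously yield an \'etale projection to $\Aff^n_V$ \emph{and} separate $x$ from the other preimages of $y$ meeting $\varphi^{-1}(\varphi(Z))$. The finite residue field case requires a nontrivial adaptation of Gabber's polynomial substitution trick to the mixed characteristic setting, where one must track both generic and closed fibre behavior of the arising morphisms.
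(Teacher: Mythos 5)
Your overall strategy matches the paper's: build, locally at $x$, a map $X\to\Aff^n_V$ that is \'etale and that carries the divisor $Z$ isomorphically to a divisor in the target after localizing at $y=\varphi(x)$, and then read off the elementary distinguished square. But there are two substantive differences, and the second one is where the proposal falls short.

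The paper does not start from a local generator of $I_Z$ and then fill out the coordinate system. Instead (Lemma~\ref{Just_Etale_at_x} and Proposition~\ref{bar pi}) it first realizes a shrinkage of $X$ as the open complement in a normal projective $\bar X$ equipped with a very ample $\cM$, and then builds a \emph{finite flat} $V$-morphism $\bar\pi=[s_0:\ldots:s_n]:\bar X\to\Pro^{n,w}_V$ into a weighted projective space, choosing $s_1,\ldots,s_n$ by Poonen's Bertini theorem over finite fields so as (i) to interpolate prescribed classes $f_1,\ldots,f_n$ modulo $m_{X_v,x}^2$ (making $\pi_v$ an isomorphism on $x^{(2)}$, hence $\pi$ \'etale at $x$ with $k(\pi(x))\cong k(x)$), (ii) to cut the dimension of the infinity locus and of $\bar Z_v$ by one at each stage, and (iii) to force $\bar\pi^{-1}(\bar\pi(x))\cap\bar Z=\{x\}$. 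Global finiteness of $\pi$ is then used in an essential way in the last step: after base change to $\cW=\Spec\cO_{\Aff^n_V,y}$ one gets a \emph{finite} map $\cU\to\cW$, so that $\cZ$ is local with unique closed point $x$ and $\pi|_{\cZ}:\cZ\to\cS$ is finite; together with property (2) of $\pi$ and Nakayama this gives the scheme isomorphism $\cZ\to\cS$, and the decomposition $(\pi^\circ)^{-1}(\cS)=\cZ\sqcup\cZ'$ follows. Finally $g$ is produced \emph{afterward} from $\cS$ by factoriality of the regular local ring $\cW$; it is not a priori a coordinate function. Your variant of taking $g=T_1$ with $\varphi^*T_1=f_1$ a local equation of $Z$ is a legitimate simplification \emph{provided} you have arranged the separation conditions you list; one still has to argue that the resulting \'etale map $\cX\cap Z\to\{g=0\}$ is an isomorphism, for which your ``closed immersion near $x$'' plus connectedness of $\{g=0\}$ suffices.

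The genuine gap is your Step 2, which is exactly the content of Proposition~\ref{bar pi} and carries all the technical weight of the theorem. For infinite $k(v)$ a generic linear projection is plausible, but for finite $k(v)$ you only appeal, without details, to ``Gabber's polynomial substitution trick adapted to mixed characteristic.'' That adaptation is nontrivial: one must simultaneously control (a) \'etaleness at $x$ together with $k(\pi(x))\cong k(x)$, (b) finiteness of the projection near $x$, (c) separation $\pi^{-1}(\pi(x))\cap Z=\{x\}$, and (d) the interaction of these with both the special and generic fibers of $X/V$. The paper achieves all of this via the weighted-projective/Poonen construction, and it is not clear that a direct polynomial-substitution argument yields all of (a)--(d) over a DVR base without essentially redoing this work. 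In short, the proposal correctly identifies the target construction and correctly reduces the theorem to it, but leaves the core construction (finite residue field case) as a gesture rather than an argument.
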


\section{Proof of Theorem \ref{geometric_form_of_O-G-L}}
A comment concerning the terminology used in this section:
if we replace (shrink) the $X$ with its open subset $X'$, then
we replace its closed subset $Z$ with $Z'=Z\cap X'$.
We say in this case that we replace (shrink) the $Z$ {\it accordingly}.

The proof of Theorem \ref{geometric_form_of_O-G-L} consists of five steps.
\begin{proof}[Proof of Theorem \ref{geometric_form_of_O-G-L}]

{\it Step 1}. The following Lemma is true since $X$ is $V$-smooth.
\begin{lem}\label{Just_Etale_at_x}{\rm
Let $X/V$, an integer $n\geq 1$, a closed subset $Z\subset X$, a closed point $x\in Z_v$
be as in Setup \ref{setup:basic}.
Shrinking $X$ such that $x$ {\it belongs to the shrunk} $X$
and replacing $Z$ accordingly
we may and will suppose that
$x\in Z_v\subset X_v$ and
there is
an open embedding $i: X\hra \bar X$, a finite surjective $V$-morphism
$\bar \rho: \bar X\to \Pro^n_V$ enjoying the following properties \\
(a) the $V$-scheme $\bar X$ is projective, irreducible, reduced and normal; \\
(b) $\bar \rho(X)\subset \Aff^n_V$; \\
(c) $\bar \rho|_X: X\to \Aff^n_V$ is \'{e}tale.
}
\end{lem}

The following well-known lemma was communicate via e-mail to us by K. \v{C}esnavi\v{c}ius.
\begin{lem}\label{reg_locus}{\rm
Let$À$ be an integral domain finitely generated over our D.V.R $D$.
Suppose the fraction field $\text{Frac}(A)$ is of characteristic zero.
Suppose $A$ is normal. Put $Y = \text{Spec} A$.
Let $Y_{reg}$ be the subset of regular points of $Y$. Then \\
(1) $Y_{reg}$ is an open subset in $Y$;\\
(2) $Y - Y_{reg}$ has codimension at least 2 in $Y$.
}
\end{lem}

\begin{proof}
A mixed characteristic DVR is excellent. Thus any ring / scheme that is of finite type over it is also excellent
(https://stacks.math.columbia.edu/tag/07QW).
It is then a general fact that for an excellent scheme the locus of regular points is open
(EGA IV, 7.8.3 (iv)).
This proves the assetion (1).

Now to prove the assertion (2) it suffices to check that $Y_{reg}$ contains every point $y$ of $Y$ of height $\le 1$.
The local domain $\cO_{Y, y}$ is normal of dimension $\le 1$, so it is either a field or a DVR.
In particular, $\cO_{Y, y}$ is regular, so $y\in Y_{reg}$, as desired.
\end{proof}

\begin{cor}\label{reg_locus_2}{\rm
Let $\bar X_{reg}$ be the subset of regular points of $\bar X$. Then \\
(1reg) $\bar X_{reg}$ is an open subset in $\bar X$, \\
(2reg) $\bar X-\bar X_{reg}$ has codimension at least 2 in $\bar X$.
}
\end{cor}
The following lemma is a consequence of Corollary \ref{reg_locus_2}.


\begin{lem}\label{X_reg}{\rm
Let $X$, $x\in X_v$, $\bar X$, $\rho: \bar X\to \Pro^n_V$ be as in Lemma \ref{Just_Etale_at_x}.
Replacing $X$ with its affine open subscheme $X'$ one can find an affine open subscheme $\tilde X$ in $\bar X$ such that \\
(0) $X'\subset \tilde X\subset \bar X$ (both inclusions are open); \\
(1) $x\in X'$ and $X'$ is $V$-smooth.\\
(2) the scheme $\tilde X$ is regular;\\
(3) $\tilde X_v$ is the disjoint union of its irreducible components $\tilde X_{v,j}$ ($j\in J$);\\
(4) $X'=\tilde X-(\sqcup_{j\in J''} \tilde X_{v,j})$, where $J''=\{j\in J| X'\cap \tilde X_{v,j}=\emptyset \}$;\\
Moreover, \\
(5) $\text{dim} (\bar X_v - \tilde X_v)=n-1$; \\
(6) for each $j\in J$ one has $\text{dim}\tilde X_{v,j}=n$;\\
(7) put $J'=J-J''$, then for each $j\in J'$ the $\tilde X_{v,j}$ is $k(v)$-smooth and $\tilde X_{v,j}\subset X'$.
}
\end{lem}

\begin{proof}
Let $\theta\in D$ be a local parameter. Since $\bar X$ is irreducible it follows that $\theta$
is not a zero-divisor in each local ring $\cO_{\bar X,y}$ for $y\in \bar X_v$.
Thus, the closed subscheme $\bar X_v$ of $\bar X$ is equidimensional
of dimension $n$. Let $\bar X_{v,i}$ ($i\in I$) be all its irreducible components. Then for each $i\in I$ one has $\text{dim}\bar X_{v,i}=n$.
And for each $i\neq i'$ in $I$ one has $\text{dim}(\bar X_{v,i}\cap \bar X_{v,i'})< n$.

Let $\bar X_{reg}$ be the regular locus of $\bar X$. By Corollary \ref{reg_locus_2} $\bar X_{reg}$ is open in $\bar X$.
Thus, $\text{Sing}(\bar X):=\bar X-\bar X_{reg}$ is closed in $\bar X$. By the same Corollary it is of codimension at least $2$ in $\bar X$. So,
$\text{Sing}(\bar X)$ does not contain any of $\bar X_{v,i}$. Since $X$ is $V$-smooth it follows that $X\subset \bar X_{reg}$.
As a consequence, $x\notin \text{Sing}(\bar X)$.

Now put $T_{\eta}:=\bar X_{\eta}-X_{\eta}$. It is closed in $\bar X_{\eta}$. Since $\bar X_{\eta}$ is irreducible of dimension $n$,
it follows that the dimension of $T_{\eta}$ is at most $n-1$.
Let $\bar T_{\eta}$ be the closure of $T_{\eta}$ in $\bar X$.
Then the dimension of $\bar T_{\eta}$ is at most $n$.
Let $T_s$ be an irreducible component of $\bar T_{\eta}$. Then $(T_s)_v$ is its proper closed subset.
Thus, its dimension is at most $n-1$. It follows that as $T_s$, so $\bar T_{\eta}$ does not contain
any of $\bar X_{v,i}$. Since $\bar X_{\eta}-X_{\eta}\subset \bar X-X$ and $\bar X-X$ is closed in $\bar X$
it follows that $\bar T_{\eta}\subset (\bar X-X)$. Thus $x\notin \bar T_{\eta}$.

Put $Y=\text{Sing}(\bar X)\cup \bar T_{\eta}\cup \bigcup_{(i,i')|i\neq i'}(\bar X_{v,i}\cap \bar X_{v,i'})$.
For each $i\in I$ choose a closed point $x_i\in \bar X_{v,i}$, which is not in $Y_v$ and which is different of the point $x$.
It is available to do this since as $\text{Sing}(\bar X)$, so $\bar T_{\eta}$ do not contain any of $\bar X_{v,i}$.
We checked that $x\notin (\text{Sing}(\bar X)\cup \bar T_{\eta})$.
Since the scheme $\bar X_v$ is smooth at the point $x$, it follows that $x$ is not in any of $\bar X_{v,i}\cap \bar X_{v,i'}$ for $i\neq i'$.

Put $\cL:=\bar \rho^*(\cO_{\Pro^n_V}(1))$.
Since $\rho$ is finite the $\cL$
is ample.
Put $Y^{ext}:=Y\sqcup \{x_i\}_{i\in I}\sqcup \{x\}$.
By the Serre vanishing theorem there exist an integer $m>>0$ such that
$H^0(\bar X, \cL^{\otimes m})$ surjects on $H^0(Y^{ext},\cL^{\otimes m}|_{Y^{ext}})$.

Thus, there exists a global section $s$ of $\cL^{\otimes m}$ which identically vanishes on
$Y$ and $s(x)\neq 0$ and for each $i\in I$ $s(x_i)\neq 0$.

Let $[T_0:T_1:...:T_n]$ be homogeneous coordinates on the $\Pro^n_V$
such that the $\Aff^n_V=(\Pro^n_V)_{T_0}$. By Lemma \ref{Just_Etale_at_x} one has
$X\subset \bar X_{s_{inf}}$, where $s_{\inf}:=\bar \rho^*(T_0)$.
Thus, $X_s\subset \bar X_{s\cdot s_{\inf}}$. Put
$$X'=X_s \ \ \ \text{and} \ \ \ \tilde X=\bar X_{s\cdot s_{\inf}}.$$
Clearly, $X'\subset \tilde X\subset \bar X$ are open subschemes.

Prove that as $X'$, so $\tilde X$ is an affine scheme.
The scheme $X_{aff}:=\bar X_{s_{inf}}$ is affine since it is finite over the $\Aff^n_V$.
Thus, its open subscheme $\bar X_{s\cdot s_{inf}}$ is also affine as the principal open subscheme of $X_{aff}$
corresponding to the regular function
$f:=Res_{X_{aff}}(s)/(Res_{X_{aff}}(s^{\otimes m}_{\inf})$ on $X_{aff}$.
Write $f$ for the restriction $f|_X$. {\it Then} $X':=X_f$. Since $X$ is affine it follows that $X'$ is also an affine scheme.

We left to the reader to check that for
for these $X'$ and $\tilde X$
the assertions (1) to (7) are true.

\end{proof}

\begin{agr}\label{Z_reg_and X_circ}{\rm
Put $Z'=Z\cap X'$. Then one has $x\in Z'_v$.
To simplify notation write below in the preprint $X$ for $X'$, $\bar \rho|_X$ for $\bar \rho|_{X'}$ and $Z$ for $Z'$.
Note that $\bar \rho|_X$ remains \'{e}tale and $x$ remains in $Z_v$.
}
\end{agr}

\begin{notation}\label{X_Z_bar Z_bar X_X_v}{\rm
Let $\bar Z$ be the closure of $Z$ in $\bar X$.
We will work below with {\it the chosen $\tilde X$ and with $\tilde Z:=\bar Z\cap \tilde X$}. Clearly,
{\it the closure} of the $\tilde Z$ in $\bar X$ coincides with $\bar Z$.
Note that $\tilde Z$ is closed in $\tilde X$ and it does not contain any irreducible
component $\tilde X_{v,j}$ of the $\tilde X_v$.
}
\end{notation}

\begin{rem}\label{X_reg_v}{\rm
By Lemma \ref{X_reg} the $k(v)$-scheme $\tilde X_v$ is equidimensional of dimention $n$ and
for each $j\in J'$ its irreducible component $\tilde X_{v,j}$ is $k(v)$-{\it smooth}.
However for $j\in J-J'$ the $k(v)$-scheme $\tilde X_{v,j}$ can be {\it non-reduced}.
}
\end{rem}

%
%
{\it Step 2}.
\begin{notation}\label{closed_fibre}{\rm
Let $Y$ be an irreducible affine $V$-smooth scheme of pure relative dimension $n\geq 1$
(particularly its closed fibre $Y_v$ is non-empty).
Let $y\in Y_v$ be a closed point. We write $m_{Y_v,y}$ for the maximal
{\it ideal in}
$\Gamma(Y_v,\cO_{Y_v})$ corresponding to the point $y\in Y_v$.

We write $y^{(2)}$ for the closed subscheme $Spec\ \cO_{Y_v,y}/m^2_{Y_v,y}$ of the scheme $Y_v$.
}
\end{notation}

\begin{lem}\label{two_fields}{\rm
Put $k=k(v)$. Let $T$ be an affine $k$-scheme of finite type of pure dimension $d$.
Suppose $T$ is $k$-smooth at a closed point $t\in T$. Then there is a closed point $y\in \Aff^d_k$ such that
$k(t)$ and $k(y)$ are isomorphic as the $k$-algebras.
}
\end{lem}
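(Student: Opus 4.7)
The plan is straightforward: reduce to a statement about finite separable extensions of $k$ and then construct the desired closed point of $\Aff^d_k$ explicitly.

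First I would observe that, since $T$ is $k$-smooth and $t \in T$ is a closed point, the residue field extension $k(t)/k$ is finite and separable. Finiteness follows because $T$ is of finite type over $k$ (Hilbert Nullstellensatz / Zariski's lemma). Separability is a standard consequence of smoothness: the fiber of $T_{\bar k} \to T$ over $t$ is $\Spec(k(t) \otimes_k \bar k)$, and smoothness forces this to be regular, hence reduced, which is equivalent to $k(t)/k$ being separable.

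Next I would invoke the primitive element theorem to write $k(t) = k(\alpha)$ for some $\alpha$ with minimal polynomial $p(T) \in k[T]$ of degree $e = [k(t) : k]$. Since $d \geq 1$ (the case $d = 0$ would force $T$ to be a disjoint union of points, with $T$ itself a product of fields, and the claim is trivial), I then consider the ideal
\[
\spp p = (p(T_1),\; T_2,\; T_3,\; \ldots,\; T_d) \subset k[T_1, T_2, \ldots, T_d].
\]
This ideal is maximal since
\[
k[T_1, \ldots, T_d]/\spp p \;\cong\; k[T_1]/(p(T_1)) \;\cong\; k(\alpha) \;=\; k(t),
\]
so the corresponding closed point $y \in \Aff^d_k$ has residue field $k(y)$ isomorphic to $k(t)$ as a $k$-algebra.

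There is no real obstacle here; the only subtle point is making sure the separability of $k(t)/k$ is invoked properly so that the primitive element theorem applies (which matters when $k = k(v)$ is an imperfect field of characteristic $p$, as is permitted in this paper's mixed characteristic setup). Everything else is a direct construction.
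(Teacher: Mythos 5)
Your proof is correct and takes a genuinely more elementary route than the paper's. The paper does not give an argument at all: it simply cites Ojanguren's Lemma from \cite[Ch.VIII, Corollary 3.5.2]{Knu} when $k$ is infinite and Lindel's Lemma \cite[Proposition 2]{L} when $k$ is finite, both of which are considerably stronger geometric statements producing \'etale neighborhoods, of which the residue-field isomorphism here is only a minor corollary. Your argument is uniform in $k$: smoothness of $T$ at the closed point $t$ forces $k(t)/k$ to be separable, the primitive element theorem then gives $k(t) \cong k[T_1]/(p(T_1))$, and you plant that maximal ideal inside $k[T_1,\dots,T_d]$ by adjoining $T_2,\dots,T_d$. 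This is self-contained, avoids the finite/infinite case split, and buys transparency; the paper's citations buy nothing extra for this particular statement.

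One small inaccuracy: your parenthetical remark that the case $d=0$ is ``trivial'' is not quite right. If $T$ is irreducible affine $k$-smooth of dimension $0$, then $T = \Spec L$ for a finite separable extension $L/k$, and the lemma would demand a closed point of $\Aff^0_k = \Spec k$ with residue field isomorphic to $L$, which fails unless $L = k$. In the paper this never arises, since $T$ is the closed fibre of a smooth $V$-scheme of relative dimension $n\geq 1$, so $d\geq 1$; but it is worth noting the lemma as literally stated implicitly assumes $d\geq 1$.
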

\begin{proof}
In the case of an infinite field $k$ this is a consequence of Ojanguren's Lemma
\cite[Ch.VIII,Corollary 3.5.2]{Knu}.
In the case of a finite field $k$ this is a consequence of
Lindel's Lemma
\cite[Proposition 2]{L}.
\end{proof}

\begin{lem}\label{two_points_and_Cohen}{\rm
In the hypotheses of Lemma \ref{two_fields} the $k$-algebras
$k[t^{(2)}]=k[T]/m^2_t$ and $k[y^{(2)}]=k[\Aff^d_k]/m^2_y$ are
isomorphic.
}
\end{lem}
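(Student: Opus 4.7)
The plan is to upgrade the residue-field isomorphism $k(t) \cong k(y)$ supplied by Lemma \ref{two_fields} to a full $k$-algebra isomorphism $k[t^{(2)}] \cong k[y^{(2)}]$ of the square-zero thickenings. My approach proceeds in three steps.

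First, I would revisit the proofs of Ojanguren's lemma \cite[Ch.VIII, Cor.3.5.2]{Knu} and Lindel's lemma \cite[Prop.2]{L} cited in Lemma \ref{two_fields}. I expect each of these to deliver more than a bare residue-field match: the underlying construction should produce, along with the closed point $y \in \Aff^d_k$, a local \'etale $k$-algebra homomorphism
\[
\phi: \cO_{\Aff^d_k, y} \longrightarrow \cO_{T, t}
\]
(possibly after an inessential Zariski shrinking of $T$ around $t$, which does not affect the local ring at $t$) whose reduction on residue fields realises the $k$-algebra isomorphism $k(y) \xrightarrow{\sim} k(t)$ of Lemma \ref{two_fields}. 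So the first step is to extract this refined statement from the cited arguments.

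Second, given such a local \'etale $\phi$ with iso on residue fields, I would invoke the standard fact that the induced map on completions $\hat\phi: \hat\cO_{\Aff^d_k, y} \to \hat\cO_{T, t}$ is an isomorphism of complete local $k$-algebras. Briefly: the \'etaleness of $\phi$ gives flatness of $\hat\phi$ together with the equality $m_y \hat\cO_{T,t} = m_t \hat\cO_{T,t}$; the residue-field hypothesis combined with Nakayama's lemma forces surjectivity of $\hat\phi$; and a flat local homomorphism of Noetherian local rings is faithfully flat, hence injective. This is the Cohen-type structure-theorem argument that justifies the lemma's name. Third, reducing $\hat\phi$ modulo the square of the maximal ideal on both sides, and using that $R/m^n = \hat R/\hat m^n$ for any Noetherian local ring $R$, I obtain the required $k$-algebra isomorphism
\[
k[t^{(2)}] \;=\; \cO_{T, t}/m_t^2 \;\cong\; \cO_{\Aff^d_k, y}/m_y^2 \;=\; k[y^{(2)}].
\]

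The main obstacle I anticipate is Step 1, namely carefully extracting the local \'etale map with the claimed residue-field behaviour from the Ojanguren and Lindel constructions rather than only the abstract iso of residue fields. Ojanguren's argument, valid over infinite fields, uses generic projections and visibly produces such a map. Lindel's finite-field argument is more delicate, requiring a careful choice of primitive polynomials, but the same kind of pointed \'etale neighbourhood is constructed; the output is precisely a pointed \'etale neighbourhood of $(\Aff^d_k, y)$ identified with (an open subset of) $T$ near $t$, which is what the completion argument needs. In the separable-residue-field case a purely Cohen-structure-theoretic argument (pick coefficient fields containing $k$) would also work, but the inseparable case genuinely requires the extra \'etale-map input from Lindel.
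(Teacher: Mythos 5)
Your proof is correct, but it takes a genuinely different route from the paper. The paper argues abstractly: it takes only the residue-field isomorphism $k(t)\cong k(y)$ supplied by Lemma \ref{two_fields}, invokes Cohen's structure theorem (using $k$-smoothness of $T$ to ensure the completion $\widehat{\cO_{T,t}}$ is a power series ring over $k(t)$ as a $k$-algebra), concludes $\widehat{\cO_{T,t}}\cong\widehat{\cO_{\Aff^d_k,y}}$ as $k$-algebras, and reduces modulo the square of the maximal ideal. You instead decline to use Lemma \ref{two_fields} as a black box and go back to Ojanguren's and Lindel's constructions to extract the stronger output of a pointed local \'etale $k$-algebra map $\cO_{\Aff^d_k,y}\to\cO_{T,t}$ inducing an isomorphism on residue fields, then pass to completions via Nakayama and faithful flatness. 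The obstacle you flag in Step 1 is real but fillable: both \cite[Ch.VIII, Cor.~3.5.2]{Knu} and \cite[Prop.~2]{L} are presentation lemmas whose actual conclusion is precisely such a pointed \'etale neighborhood of a point of $\Aff^d_k$ with the same residue field; the paper's Lemma \ref{two_fields} simply records the weaker corollary about residue fields alone. One further remark: your concern in the final paragraph that the Cohen route ``genuinely requires'' the \'etale map when $k(t)/k$ is inseparable is slightly overstated. Although a Cohen coefficient field of $\widehat{\cO_{T,t}}$ need not contain $k$ when $k(t)/k$ is inseparable, the local ring $\cO_{T,t}$ is formally smooth over $k$ in the adic topology (this is what $k$-smoothness of $T$ buys, independently of separability of the residue extension), and the uniqueness of complete local $k$-algebras that are formally smooth over $k$ with prescribed residue field and dimension still yields the required $k$-algebra isomorphism of completions. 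That said, your \'etale-map argument sidesteps this subtlety entirely, which is a modest advantage of your approach over the paper's terser justification.
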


\begin{proof}
By Lemma \ref{two_fields} there is a closed point $y\in \Aff^d_k$ such that
$k(t)$ and $k(y)$ are isomorphic as the $k$-algebras.
Since our $T$ is $k$-smooth at a closed point $t\in T$ it follows by the Cohen theorem that
the completed $k$-algebra $\widehat{k[T]}_{m_t}$ is isomorphic to the completed $k$-algebra
$\widehat{k[\Aff^d_k]}_{m_y}$.
Thus, the $k$-algebras
$k[t^{(2)}]$ and $k[y^{(2)}]$ are isomorphic.
\end{proof}

{\it Step 3.}
First, let $X/V$, an integer $n\geq 1$, the closed subset $Z\subset X$, the closed point $x\in Z_v$
be as in Setup \ref{setup:basic}.
Let $\bar X$, $\bar \rho: \bar X\to \Pro^n_V$
be as in the conclusion of Lemma \ref{Just_Etale_at_x}.
Let $X'\subset \tilde X\subset \bar X$ be as in Lemma \ref{X_reg}.
Finally, take into account Agreement \ref{Z_reg_and X_circ} and
use Notation \ref{X_Z_bar Z_bar X_X_v}.
Then $x\in X$, $X\subseteq \tilde X$ and $X$ is $V$-smooth.
Particularly, the $V$-scheme $\tilde X$ is $V$-smooth at the point $x$.
Also, the morphism $\bar \rho|_X$ is \'{e}tale and $x$ is in $Z_v$.
We also have the $\bar Z$ in $\bar X$ with $Z=X\cap \bar Z$ and $\tilde Z=\tilde X\cap \bar X$.


The aim of this Step is to prove the following
\begin{prop}\label{bar pi}{\rm
Let $\cL_{new}:=\cL^{\otimes (m+1)}$ and
$s_{new}=s\cdot s_{\inf}\in \Gamma(\bar X,\cL_{new})$, where $\cL^{\otimes (m+1)})$ and
$s\cdot s_{\inf}\in \Gamma(\bar X, \cL^{\otimes (m+1)})$ be as in the proof of
Lemma \ref{X_reg}.
Let
$X_{\infty}\subset \bar X$ be the Cartier divisor corresponding to the section $s_{new}$
(particularly, $\tilde X=\bar X-X_{\infty}$).
Then
there is a finite surjective $V$-morphism
\begin{equation}
\label{morphism:bar pi}
\bar \pi: \bar X \to \Pro^{n,w}_V,
\end{equation}
where $\Pro^{n,w}_V$ is the weighted projective space with homogeneous coordinates
$[t_0:t_1:...:t_n]$ of certain weights
$1,e_1,...,e_n$ (with $e_i$ divides $e_{i+1}$).
Moreover $(\Pro^{n,w}_V)_{t_0}=\Aff^n_V$ and
the morphism $\bar \pi$ enjoy the following properties: \\
(0) $\Tilde X=\bar \pi^{-1}((\Pro^{n,w}_V)_{t_0})$; (write below in this section $\pi$ for $\bar \pi|_{\Tilde X}: \Tilde X\to \Aff^n_V$); \\
(1) the morphism $\pi: \Tilde X\to \Aff^n_V$ is a finite and flat; \\
(2) the $\pi_v|_{x^{(2)}}: x^{(2)}\to \pi(x)^{(2)}$ is a scheme isomorphism (here $\pi_v=\pi|_{X_v}: \Tilde X_v \to \Aff^n_v$);\\
(3) $\bar \pi^{-1}(\bar \pi(x))\cap \bar Z=\{x\}$ and $\pi^{-1}(\pi(x))\cap \Tilde Z=\{x\}$. \\
As a consequence the following is true:\\
(4) $\pi$ is \'{e}tale at $x\in \Tilde X$,\\
(5) for a principal divisor $E\subset \Tilde X$ and the principal open subscheme
$X^{\circ}:=\Tilde X-E$ the $\pi|_{X^{\circ}}: X^{\circ}\to \Aff^n_V$ is \'{e}tale and $x\notin E$,\\
(6) $\pi^*: k(\pi(x))\to k(x)$ is a field isomorphism.
}
\end{prop}

\begin{proof}[Proof of Proposition \ref{bar pi}]
The proof is rather long. In particular it requires to state and
to prove Lemmas \ref{n-1} and \ref{n}. In turn, to prove these Lemmas we
state and prove a Claim right below.

Note that $\cL_{new}$ is an ample locally free $\cO_{\bar X}$-module. Thus, there is an integer $M>>0$
such that the locally free $\cO_{\bar X}$-module $\cM:=\cL_{new}^{\otimes M}$ is very ample.
So, there is a closed embedding of the $V$-schemes
$in: \bar X \hra \Pro^N_V$
such that
$\cM=inc^*(\cO_{\Pro^N_V}(1))$.
Let $in_v: \bar X_v \hra \Pro^N_v$ be the closed $v$-embedding of the closed fibres.

For each
{\it finite subscheme}
$Y$ of $\bar X_v$ consider the commutative square
(point out that in the left bottom corner we insert $\bar X$ rather than $\bar X_v$)
\begin{equation}
\label{Poonen}
    \xymatrix{
H^0(\Pro^N_V,\cO_{\Pro^N_V}(d))  \ar[d]^{Res_{\bar X}}  \ar[rr]^{Res_{\text{P}_v}} && H^0(\Pro^N_v,\cO_{\Pro^N_v}(d)) \ar[d]^{res_Y}  &\\
H^0(\bar X,\cM^{\otimes d})  \ar[rr]^{Res_Y} &&   H^0(Y,\cO_Y(d))   &,\\    }
\end{equation}
The map $res_Y$ is surjective if $d\geq \text{dim}_{k(v)}H^0(Y,\cO_Y)-1$ by \cite[Lemma 2.1]{Poo}.
That Lemma is stated and proved for an arbitrary base field.
Thus, the cited Lemma yields the following

{\it Claim.} The homomorphism $Res_Y$
is surjective for each $d\geq \text{dim}_{k(v)}H^0(Y,\cO_Y)-1$.\\\\
Consider now the finite subscheme $x^{(2)}= Spec\ \cO_{\Tilde X_v,x}/m^2_{\Tilde X_v,x}$ of the scheme $\Tilde X_v$.
We know that the $V$-scheme $\Tilde X$ is $V$-smooth at the point $x$.
By Lemma \ref{two_points_and_Cohen}
there are $f_1,...,f_n\in \Gamma(\Tilde X_v,\cO_{\Tilde X_v})$ generating the $k(v)$-algebra $\cO_{\Tilde X_v,x}/m^2_{\Tilde X_v,x}$.
These means that the morphism $\bar f=(\bar f_1,...,\bar f_n): x^{(2)}\to \Aff^n_v$ is a closed embedding.
Then the point $y=\bar f(x)$ is a closed point of $\Aff^n_v$ such that particularly the induced map
$k(v)(x)\leftarrow k(v)(y)$ is a field isomorphism. Using an affine translation of $\Aff^n_v$ by a $k(v)$-rational vector
we may and will suppose below in this proof that $y$ is in $\mathbb G^n_{m,k(v)}$.
\begin{rem}\label{x(2)_and_y(2)}
Since the $\bar f: x^{(2)}\to \Aff^n_v$ is a closed embedding the $\bar f$ identifies $x^{(2)}$
with the closed subscheme $y^{(2)}$ of $\Aff^n_v$.
\end{rem}
Consider projections $q_r: \Aff^n_v\to \Aff^r_v$ (r=1,...,n) and put $y_r=q_r(y)$. Clearly,
$y_r\in \mathbb G^r_{m,k(v)}$ and $y_n=y$.
\begin{rem}\label{h_r_and x_and y_r}
Let $h_1,...,h_n\in \Gamma(\Tilde X_v,\cO_{\Tilde X_v})$ be such that for each $r=1,...,n$ one has
$h_r\equiv f_r\text{mod}\ m^2_{\Tilde X_v,x}$.
Consider morphisms
$h=(h_1,...,h_n): \Tilde X_v\to \Aff^n_v$
and
$h_{(r)}=(h_1,...,h_r): \Tilde X_v\to \Aff^r_v$ ($r=1,...,n$).
Then $h_{(r)}=q_r\circ h$ for $r=1,...,n$ and $h_{(n)}=h$.
Furthermore $h(x)=y$. Thus, for each $r=1,...,n$ the point $h_{(r)}(x)=y_r$ and it belongs to $\mathbb G^r_{m,k(v)}$.
\end{rem}
\begin{rem}\label{z_and_y_r}
Let $z\in \Tilde Z_v$ be a closed point. Suppose $h_r(z)=0$, then $h_{(r)}(z)\notin \mathbb G^r_{m,k(v)}$.
Thus, $h_{(r)}(z)\neq y_r$ and $z\notin h^{-1}_{(r)}(y_r)$.
\end{rem}

\begin{lem}\label{h_r}{\rm
Let $1\leq r\leq n$. Let $h_1,...,h_r\in \Gamma(\Tilde X_v,\cO_{\Tilde X_v})$ be such that for each $i=1,...,r$ one has
$h_i\equiv f_i\text{mod}\ m_{\Tilde X_v,x}$.
Consider morphism
$h_{(r)}=(h_1,...,h_r): \Tilde X_v\to \Aff^r_v$.
Then $h_{(r)}(x)=y_r$.
In particular, $h_{(r)}(x)\in \mathbb G^r_{m,k(v)}$.
}
\end{lem}
\begin{proof}
For each $r<j\leq n$ take an $h_j\in \Gamma(\Tilde X_v,\cO_{\Tilde X_v})$ such that $h_j\equiv f_j\text{mod}\ m_{\Tilde X_v,x}$.
Now by Remark \ref{h_r_and x_and y_r} $h_{(r)}(x)=y_r$ and it belongs to $\mathbb G^r_{m,k(v)}$.
\end{proof}

\begin{notation}
Put $s_0=s_{new}^{\otimes M}\in \Gamma(\bar X,\cM)$, {\it where $M$ and $\cM$ are as above in this proof}.
\end{notation}

\begin{lem}\label{n-1}{\rm
For each $r=1,...,n-1$ there are integers $e_1,...,e_r$ with $e_i$ dividing $e_{i+1}$,
and sections
$s_i\in H^0(\bar X,\cM^{\otimes e_i})$ and functions
$g_i=Res_{\tilde X}(s_i)/Res_{\tilde X}(s^{\otimes e_i}_0)\in \Gamma(\Tilde X,\cO_{\Tilde X})$
with $i=1,...,r$ such that \\
(i) $\text{dim}(\{s_0=s_1=...=s_r=0\}_v)=(n-1)-r$ \\
(ii) for $i=1,...,r$ one has $Res_{x^{(2)}}(g_i)=Res_{x^{(2)}}(f_i)$ in $\Gamma(\tilde X_v,\cO_{\tilde X_v})/m^2_{\Tilde X_v,x}=\Gamma(x^{(2)},\cO_{x^{(2)}})$;\\
(iii) for $p_r=(g_1,...,g_r): \Tilde X\to \Aff^r_V$ the dimension of
$p^{-1}_r(y_r)_v \cap \Tilde Z_v$ is at most $(n-1)-r$;\\
(iv) for the morphism $p_r$ one has $x\in p^{-1}_r(y_r)_v \cap \Tilde Z_v$. \\
}
\end{lem}

\begin{rem}\label{Z'_n-1}
By the items $(iii)$ and $(iv)$ of Lemma \ref{n-1} one has
$p^{-1}_{n-1}(y_{n-1})_v \cap \Tilde Z_v=\{x\}\sqcup Z'$, where either $\text{dim} Z'=0$ or $Z'=\emptyset$.
\end{rem}

\begin{lem}\label{n}{\rm
Take $r=n-1$ and the sections $s_1,...,s_{n-1}$ and the functions $g_1,...,g_{n-1}$ as in Lemma \ref{n-1}.
Then there is
an integer $e_n$ with $e_{n-1}$ dividing $e_{n}$ and
a section $s_n\in H^0(\bar X,\cM^{\otimes e_n})$ and the function
$g_n=Res_{\tilde X}(s_n)/Res_{\tilde X}(s^{\otimes e_n}_0)\in \Gamma(\Tilde X,\cO_{\Tilde X})$
such that \\
(i\ fin) the closed subset $\{s_0=s_1=...=s_{n-1}=s_n=0\}_v$ of $\bar X_v$ is empty;\\
(ii\ fin) for $r=1,...,n$ one has
$Res_{x^{(2)}}(g_r)= Res_{x^{(2)}}(f_r)$ in $\Gamma(\tilde X_v,\cO_{\tilde X_v})/m^2_{\Tilde X_v,x}=\Gamma(x^{(2)},\cO_{x^{(2)}})$;\\
(iii\ fin) for $p=(g_1,...,g_n): \Tilde X\to \Aff^n_V$ one has $p^{-1}(y)_v \cap \Tilde Z_v =\{x\}$;\\
(iv\ fin) $p(x)=y$ and $p|_{x^{(2)}}: x^{(2)}\to y^{(2)}$ is a scheme isomorphism.
}
\end{lem}
Given Lemma \ref{n} complete the proof of Proposition \ref{bar pi}.
To do that consider a $V$-morphism
\begin{equation}
\label{morphism:bar pi_2}
\bar \pi=[s_0:s_1:...:s_n]: \bar X \to \Pro^{n,w}_V,
\end{equation}
where $\Pro^{n,w}_V$ is the weighted projective space of the weight
$(1,e_1,...,e_n)$.
This $V$-morphism is projective and affine. Thus, it is finite. Hence it is surjective.

It is easy to check that $(\Pro^{n,w}_V)_{t_0}\cong \Aff^n_V$.
Since the vanishing locus of the section $s_0$ equals $X_{\infty}$ it follows that
$\bar \pi^{-1}((\Pro^{n,w}_V)_{t_0})=\Tilde X$. This proves the assertion (0).
Write below in this section $\pi$ for $\bar \pi|_{\Tilde X}: \Tilde X\to (\Pro^{n,w}_V)_{t_0}$.

Thus, $\pi: \Tilde X\to \Aff^n_V=(\Pro^{n,w}_V)_{t_0}$ is a finite surjective $V$-morphism between
{\it regular} irreducible schemes.
By the miracle flatness
$\pi: \Tilde X\to \Aff^n_V$ is flat.
The assertion (1) is proved.

Clearly, $\bar \pi^{-1}(\bar \pi(x))\cap \bar Z=\pi^{-1}(\pi(x))\cap \Tilde Z$.
Since
$\pi=(g_1,...,g_n)=p$, we see that $\pi^{-1}(\pi(x))\cap \Tilde Z=\{x\}$.
Thus, $\bar \pi^{-1}(\bar \pi(x))\cap \bar Z=\{x\}$.
The assertion (3) is proved.


By the very construction the morphism $\bar \pi$ enjoys the property
(2). The property (2) yields the property (6).

We already know that the morphism $\pi: \Tilde X\to \Aff^n_V$ is finite and flat.
In particular, the morphism
$\pi:\Tilde X \to \Aff^n_V$
is flat and finite at the point $x\in \Tilde X$.
Now the property (2) yields that $\pi$ is \'{e}tale at the point $x$.
The ramification locus of $\pi$ is a divisor by \cite[Ch.VI, Theorem 6.8]{A-K}.
Thus, the property (4) holds for $\pi$.
The property (5) is clear now.
To prove Proposition \ref{bar pi} it remains to prove Lemmas \ref{n-1} and \ref{n}.

\begin{proof}[Proof of Lemma \ref{n-1}]
Put $p_0: \Tilde X\to V=\Aff^0_V$ and $y_0=v\in V$. Then $p^{-1}_0(y_0)\cap \Tilde Z_v=\Tilde Z_v$.
Since $\tilde X=\bar X-X_{\infty}$ it follows that $X_{\infty}=\bar X-\tilde X$ as sets. Now by
Lemma \ref{X_reg}, the item (3), the dimension of $X_{\infty,v}$ is $n-1$.
Since $\{s_0=0\}=\{s_{new}=0\}=X_{\infty}$ as sets then the dimension of $\{s_0=0\}_v$ is $n-1$.
Thus, the lemma is true for $r=0$ (the exceptional case). The induction step.
Suppose the lemma is true for an integer $r-1\geq 0$ and prove it for the integer $r$.

Consider the closed subset $X_{\infty,v}=\{s_0=0\}_v$ in $\bar X_v$.
Consider the closed subset $Y_{r-1}:=\{s_0=s_1=...=s_{r-1}=0\}_v$ in $\bar X_v$.
Let $Y'_{r-1}$ be a finite closed subset of the $Y_{r-1}$ such that each irreducible component of $Y_{r-1}$
contains at least one point $y'$ of $Y'_{r-1}$. Since $Y'_{r-1}\subseteq X_{\infty,v}$ one has $x\notin Y'_{r-1}$.

Put $Z_{r-1}=p^{-1}_{r-1}(y_{r-1})\cap \Tilde Z_v$.
Choose a finite subset $Z'_{r-1}$ of closed points in $Z_{r-1}$ such that
each irreducible component of $Z_{r-1}$
contains at least one point $z'$ of $Z'$ and moreover $x\notin Z'_{r-1}$.
Since $\Tilde Z_v\subset \Tilde X_v$ one has $Z'_{r-1}\cap Y'_{r-1}=\emptyset$.


So, we get a closed finite subscheme $Y''_{r-1}:=(Z'_{r-1}\sqcup Y'_{r-1})\sqcup x^{(2)}$ in $\bar X_v$.
By the Claim
there is an integer $e_r$ with $e_{r-1}$ dividing $e_r$ and a section
$s_r\in H^0(\bar X,\cM^{\otimes e_r})$ such that \\
(i) $s_r$ does not vanish at any of points $y'$'s ($y'\in Y'_{r-1})$;\\
(ii) $s_r(z')=0$ for any of points $z'$'s ($z'\in Z'_{r-1})$; \\
(iii) $Res_{x^{(2)}}(s_r)=f_r\cdot Res_{x^{(2)}}(s^{\otimes e_r}_0)$ in $\Gamma(x^{(2)},\cM^{\otimes e_r}|_{x^{(2)}})$. \\\\
Now the closed subset $Y_r:=\{s_0=s_1=...=s_r=0\}_v$ of $\bar X_v$ has dimension $(n-1)-r$;\\
Put $g_r=Res_{\tilde X}(s_r)/Res_{\tilde X}(s^{\otimes e_r}_0)\in \Gamma(\Tilde X,\cO_{\Tilde X})$
and take
$p_r=(g_1,...,g_r): \Tilde X\to \Aff^r_V$; then
$\text{dim}(p^{-1}_r(y_r)_v \cap \Tilde Z_v)\leq (n-1)-r$;\\
(indeed, by Remark \ref{z_and_y_r} $z'\notin p^{-1}_r(y_r)_v$ for any $z'\in Z'_{r-1}$);  \\
$Res_{x^{(2)}}(g_r)=Res_{x^{(2)}}(f_r)$ in $\Gamma(x^{(2)},\cO_{x^{(2)}})=\Gamma(\tilde X_v,\cO_{\tilde X_v})/m^2_{\Tilde X_v,x}$;\\
$x\in p^{-1}_r(y_r)_v \cap \Tilde Z_v$ by Lemma \ref{h_r}.

The proof of the lemma is completed.
\end{proof}

\begin{proof}[Proof of Lemma \ref{n}]
By Remark \ref{Z'_n-1} one has
$p^{-1}_{n-1}(y_{n-1})_v \cap \Tilde Z_v=\{x\}\sqcup Z'$, where either $\text{dim} Z'=0$ or $Z'=\emptyset$.
Consider the closed subset $Y_{n-1}:=\{s_0=s_1=...=s_{n-1}=0\}_v$ in $\bar X_v$.
Recall that
$\{s_0=0\}=X_{\infty}$ as sets and $\tilde X=\bar X-X_{\infty}$.
Since $\Tilde Z_v\subset \Tilde X_v$,  and  one has $\Tilde Z_v\cap Y_{n-1}=\emptyset$.
By Lemma \ref{n-1} $\text{dim}(Y_{n-1})=0$.
So, we have a closed finite subscheme $Y'':=(Z'\sqcup Y_{n-1})\sqcup x^{(2)}$ in $\bar X_v$.

By the Claim there is an integer $e_n$ with $e_{n-1}$ dividing $e_{n}$ and
a section $s_n\in H^0(\bar X,\cM^{\otimes e_n})$ such that \\
(i) $s_n$ does not vanish at any of points $y$'s ($y\in Y_{n-1})$;\\
(ii) $s_n(z')=0$ for any of points $z'$'s ($z'\in Z')$; \\
(iii) $Res_{x^{(2)}}(s_n)= f_n\cdot Res_{x^{(2)}}(s^{\otimes e_n}_0)$ in $\Gamma(x^{(2)},\cM^{\otimes e_n}|_{x^{(2)}})$.\\\\
Put $g_n=Res_{\tilde X}(s_n)/Res_{\tilde X}(s^{\otimes e_n}_0)\in \Gamma(\Tilde X,\cO_{\Tilde X})$.
Now the following holds \\
(i\ fin) the closed subset $\{s_0=s_1=...=s_{n-1}=s_n=0\}_v$ of $\bar X_v$ is empty;\\
(ii\ fin) for $r=1,...,n$ one has
$Res_{x^{(2)}}(g_r)= Res_{x^{(2)}}(f_r)$ in $\Gamma(\tilde X_v,\cO_{\tilde X_v})/m^2_{\Tilde X_v,x}=\Gamma(x^{(2)},\cO_{x^{(2)}})$;\\
(iii\ fin) for $p=(g_1,...,g_n): \Tilde X\to \Aff^n$ by Remark \ref{z_and_y_r} one has $p^{-1}(y)_v \cap \Tilde Z_v =\{x\}$;\\
(iv\ fin) by Lemma \ref{h_r} $p(x)=y$ and by Lemma \ref{x(2)_and_y(2)} the morphism
$p|_{x^{(2)}}: x^{(2)}\to y^{(2)}$ is a scheme isomorphism.

The proof of the lemma is completed.
\end{proof}
The proof of Proposition \ref{bar pi} is completed.
\end{proof}

{\it Step 4}.
\begin{notation}{\rm
Following Lemma \ref{Just_Etale_at_x} and Proposition \ref{bar pi} $x\in X^{\circ}_v$. Put \\
$Z^{\circ}=\Tilde Z\cap X^{\circ}$,
${\cal W}=Spec\ \cO_{\Aff^n_V,y}$,
$\cU:=\Tilde X\times_{\Aff^n_V} \cal W$,\\
${\cal E}:= E\times_{\Aff^n_V} \cal W$,
$\cU^{\circ}=X^{\circ}\times_{\Aff^n_V} \cal W = \cU-{\cal E}$, \\
${\cal Z}:=\Tilde Z\times_{\Aff^n_V} \cal W$,\
${\cal Z}^{\circ}:=Z^{\circ}\times_{\Aff^n_V} \cal W$, \\
$\pi_{\cal W}=\pi\times_{\Aff^n_V} \cal W: \cU \to \cal W$,\\
$\pi^{\circ}_{\cal W}=\pi_{\cal W}|_{\cU^{\circ}}: \cU^{\circ} \to \cal W$. \\
Note that $x$ is a {\it closed point} of $\cU^{\circ}_v$.\\
Write shortly
$\pi$ for $\pi_{\cal W}$ and $\pi^{\circ}$ for $\pi^{\circ}_{\cal W}$
till the end of this Section.
}
\end{notation}
{\it Step 5}.
\begin{proof}[The end of proof of Theorem \ref{geometric_form_of_O-G-L}]
Since $\pi|_{X^{\circ}}$ is \'{e}tale for the affine open
$X^{\circ}\subset \Tilde X$ containing $x$ it follows that the morphism
$\pi^{\circ}: \cU^{\circ}\to \cal W$ is \'{e}tale.
Recall that $y:=\pi(x)$ is the closed point of $\cal W$.
Since an \'{e}tale morphism is open it follows that
$\pi^{\circ}: \cU^{\circ}\to \cal W$
is surjective.

The property (3) of the morphism $\pi$ yields that the closed point $x$ is a unique closed point of
the closed subset ${\cal Z}$ of the semi-local scheme $\cU$.
Hence the closed subset ${\cal Z}$ is local.
Thus, ${\cal Z}\subset \cU^{\circ}$. So,
${\cal Z}^{\circ}={\cal Z}$, ${\cal Z}^{\circ}$ is local and
the point $x\in {\cal Z}^{\circ}$ {\it its unique closed point}.

Put $\cal S:=\pi(\cal Z^{\circ})=\pi(\cal Z)\subset \cal W$. Then the subset $\cal S$ is closed in $\cal W$, since
the morphism $\pi$ is finite. Now the properties (1), (2), (3) and (4), and the Nakayama lemma yield that
the morphism of reduced schemes
$\pi|_{\cal Z}: {\cal Z}\to \cal S$
is a scheme isomorphism. Thus, the morphism
$\pi|_{{\cal Z}^{\circ}}: {\cal Z}^{\circ}\to \cal S$ is a reduced scheme isomorphism.

Since $\Tilde Z\subset \Tilde X$ is a closed subset of {\it pure codimension one} in $\Tilde X$
it follows that ${\cal Z^{\circ}}$ is closed of pure codimension one in $\cU^{\circ}$.

The morphism $\pi^{\circ}:  \cU^{\circ}\to \cal W$ is \'{e}tale. Hence its base change
$\pi^{\circ}|_{(\pi^{\circ})^{-1}(\cal S)}: (\pi^{\circ})^{-1}(\cal S)\to \cal S$
is also \'{e}tale. The inverse isomorphism
$s: \cal S\to \cal Z^{\circ}$ to the isomorphism
$\pi|_{\cal Z^{\circ}}: \cal Z^{\circ}\to \cal S$
is a section of the \'{e}tale morphism
$(\pi^{\circ})^{-1}(\cal S)\to {\cal S}$.
Thus, the section
$s: {\cal S}\to (\pi^{\circ})^{-1}(\cal S)$
is also \'{e}tale. So, its image is open in $(\pi^{\circ})^{-1}(\cal S)$.
At the same time it is closed as the image of a section.
Thus, $(\pi^{\circ})^{-1}(\cal S)={\cal Z}^{\circ} \sqcup \cal Z'$ (the disjoint union of closed subsets).
We already know that $x\in {\cal Z}^{\circ}$. Thus, $x\notin \cal Z'$.

The scheme ${\cal U}$ is affine regular and semi-local. Thus, it is regular and factorial. So,
there is an $f\in \Gamma({\cal U}, \cO_{\cal U})$ such at $\{f=0\}={\cal E}$.
Hence ${\cal U^{\circ}}={\cal U}_f$ is a principal open subset in ${\cal U}$.
Thus, ${\cal U^{\circ}}={\cal U}_f$ is also regular and factorial.
Thus, there is an $h\in \Gamma({\cal U^{\circ}}, \cO_{\cal U^{\circ}})$
such that $\{h=0\}={\cal Z^{\circ}}$ in ${\cal U}^{\circ}$
and the following square with
$\tau=\pi|_{{\cal U}^{\circ}_h}$
and
$\tau^{-}=\pi|_{{\cal U}^{\circ}_h-{\cal Z}^{\circ}}$
\begin{equation}
\label{Big_Diagram_3}
    \xymatrix{
{\cal U}^{\circ}_h - {\cal Z}^{\circ} \ar[d]^{\tau^{-}}  \ar[rr]^{In} && {\cal U}^{\circ}_h \ar[d]^{\tau} & {\cal Z}^{\circ} \ar[d]^{\cong} &\\
\cal W-\cal S  \ar[rr]^{in} &&   \cal W  &  \cal S  &,\\    }
\end{equation}
is an elementary distinguished square.

The property
$\tau(x)=y$ was already mentioned above in this proof.
It remains to find $g\in m_y$ and to check that
$\{\tau^*(g)=0\}={\cal Z}^{\circ}$ as the Cartier divisors of
the ${\cal U}^{\circ}_h$.
The affine scheme ${\cal W}$ is regular and local. Thus, it is regular and factorial.
Hence there is an $g\in m_y$ such that
$\{g=0\}={\cal S}$ in ${\cal W}$.
Clearly, $\{\tau^*(g)=0\}={\cal Z}^{\circ}$ in ${\cal U}^{\circ}_h$.

Since $\pi|_{X^{\circ}}$ is \'{e}tale for the affine open
$X^{\circ}\subset \Tilde X$ containing $x$ it follows that
the scheme $X^{\circ}$ is $V$-smooth.
The scheme ${\cal U}^{\circ}_h$ is
a {\it localization} of the $V$-smooth affine scheme $X^{\circ}$ (see Definition \ref{principal_open}).
This complete the proof of
Theorem \ref{geometric_form_of_O-G-L}.

\end{proof}
\end{proof}

\end{document}